\documentclass[journal,10pt]{IEEEtran} 
\usepackage{tikz}
\usetikzlibrary{positioning}
\usetikzlibrary{decorations.pathreplacing, shapes, arrows}
\usepackage{balance}
\usepackage{algpseudocode}
\usepackage[ruled,vlined,linesnumbered]{algorithm2e}
\SetKwInput{KwInput}{Input}
\SetKwInput{KwOutput}{Output}
\usepackage{cite}
\usepackage{amsfonts}
\usepackage{amsmath,amssymb,amsfonts,bm}

\usepackage{booktabs}
\usepackage{textcomp}
\usepackage{graphicx} 
\usepackage{epsfig}
\usepackage{amsthm}
\usepackage{subcaption}
\usepackage{mathrsfs}
\usepackage{times} % assumes new font selection scheme installed
\usepackage{amsmath} % assumes amsmath package installed
\DeclareMathOperator{\col}{col}
\DeclareMathOperator{\blkdiag}{blkdiag}
\usepackage{amssymb}  % assumes amsmath package installed
\usepackage{epstopdf}
\usepackage{array}
\usepackage{color}
\usepackage{mathtools}

\newtheorem {theorem}{Theorem}

\newtheorem {lemma}{Lemma}
\newtheorem {definition}{Definition}

\newtheorem {remark}{Remark}
 
\def\BibTeX{{\rm B\kern-.05em{\sc i\kern-.025em b}\kern-.08em
    T\kern-.1667em\lower.7ex\hbox{E}\kern-.125emX}}
\usepackage{url}
\usepackage{tikz} 

\usetikzlibrary{shapes,shapes.geometric,arrows,fit,calc,positioning,automata,}

\renewcommand{\SetKwInOut}[2]{%
	\sbox\algocf@inoutbox{\KwSty{#2}\algocf@typo:}%
	\expandafter\ifx\csname InOutSizeDefined\endcsname\relax% if first time used
	\newcommand\InOutSizeDefined{}%
	\sbox\algocf@inoutbox{\KwSty{#2}\algocf@typo\textbf{:}~}\setlength{\inoutindent}{\wd\algocf@inoutbox}%
	\else% else keep the larger dimension
	\ifdim\wd\algocf@inoutbox>\inoutsize%
	\sbox\algocf@inoutbox{\KwSty{#2}\algocf@typo\textbf{:}~}\setlength{\inoutindent}{\wd\algocf@inoutbox}%
	\fi%
	\fi% the dimension of the box is now defined.
	\algocf@newcommand{#1}[1]{%
		\ifthenelse{\boolean{algocf@inoutnumbered}}{\relax}{\everypar={\relax}}        {\let\\\algocf@newinout\hangindent=\inoutindent\hangafter=1\KwSty{#2}\algocf@typo\textbf{:}~##1\par}%
		\algocf@linesnumbered% reset the numbering of the lines
}}
\usepackage{titlesec}
\usepackage{tabularx}
\newcolumntype{L}{>{\raggedright\arraybackslash}p{0.32\columnwidth}}
\newcolumntype{R}{>{\raggedright\arraybackslash}X}

\begin{document}
\title{Consensus-Aware Co-design of Wireless Multi-Agent Control and Communication Systems}
\author{Atefeh Termehchi, \IEEEmembership{Member, IEEE}, Sajedeh Soleimani, Shiva Kazemi Taskooh, Mehdi Rasti, \IEEEmembership{Senior Member, IEEE}, and Ekram Hossain,  \IEEEmembership{Fellow, IEEE}
%\affil{Department of Electrical and Computer Engineering at the University of Manitoba, Winnipeg, Canada}
\thanks{Atefeh Termehchi, Shiva Kazemi Taskooh, and Ekram Hossain are with the Department of Electrical and Computer Engineering at the University of Manitoba, Winnipeg, Canada (emails: \{atefeh.termehchi, shiva.kazemitaskooh, ekram.hossain\}@umanitoba.ca). Sajedeh Soleimani is with Amirkabir University of Technology, Tehran, Iran (email: sajedehsoleimani@gmail.com). Mehdi Rasti is with the Centre for Wireless Communications, University of Oulu, Finland (email:{Mehdi.Rasti}@oulu.fi).}
}
\maketitle
\begin{abstract}
Autonomous multi-agent control systems (MACSs) enable distributed coordination in emerging applications such as smart power grids and autonomous vehicle or drone networks. In many of these applications, inter-agent communication relies on wireless networks. However, conventional MACS designs neglect the interaction between the communication and control subsystems. At the same time, conventional wireless designs reserve communication resources based on worst-case MACS requirements. Consequently, control performance is degraded and wireless resources are utilized inefficiently. To address these challenges, we propose a consensus-aware co-design approach for MACSs and wireless communication systems. We introduce a system model in which agents achieve consensus by exchanging information through device-to-device (D2D) links that share spectrum with cellular users under limited bandwidth resources. Based on this model, we formulate a joint optimization problem for consensus-topology selection and wireless resource allocation to minimize bandwidth consumption while guaranteeing consensus and satisfying the quality-of-service (QoS) requirements of both MACSs and cellular users. Although the resulting problem is a mixed-integer nonlinear program (NP-hard in general), a structural property of the optimal solution enables a sequential two-stage optimization algorithm with polynomial complexity. Simulations on a smart power grid case study demonstrate that the proposed approach achieves 24–27\% bandwidth savings compared with the two baselines while maintaining consensus and satisfying all QoS requirements of the control agents and the communication users.
\end{abstract}
\begin{IEEEkeywords}
Wireless multi-agent control, communication-control co-design,  
device-to-device (D2D) and cellular communication, consensus, quality-of-service 
%, bandwidth consumption.
\end{IEEEkeywords}
%\noindent\textbf{Remark (Delay-free model justification).}
%Although D2D communication introduces transmission delays, these delays are implicitly accounted for through constraint~C3, which enforces
%\[R_{ij}(T_k)\geq R_{\min}^{\mathrm{CA}}\]
%for every active D2D link. Accordingly, the packet transmission delay is bounded by
%\[\tau_{\max}=\frac{L_{\mathrm{pkt}}}{R_{\min}^{\mathrm{CA}}},\]
%where $L_{\mathrm{pkt}}$ denotes the control-packet size. Since $\tau_{\max}\ll T_s$, the communication delay is negligible relative to the load-frequency-control regulation time scale. Therefore, the delay-free model in \eqref{eq:stacked-dyn} provides an accurate approximation \cite{olfati2004consensus}.
\section{Introduction}
Autonomous multi-agent control systems (MACSs) underpin a wide range of emerging applications. In these applications, spatially distributed agents collaborate to perform real-time tasks, often without centralized coordination. Representative examples include coordinated power generation and load balancing among distributed energy resources (DERs), formation control and collision avoidance in autonomous vehicle platoons, and cooperative sensing and communication coverage by uncrewed aerial vehicles (UAVs). A core requirement across MACSs is {\em consensus}, the ability of agents to agree on a common state (e.g., load, position, and sensing) via repeated local information exchange with neighbors \cite{olfati2007consensus, ren2008distributed}. Unlike classical architectures that rely on wired or dedicated communication links with guaranteed quality-of-service (QoS), modern MACSs increasingly operate over shared wireless networks that coexist with other spectrum users. This transition reduces deployment and maintenance costs while improving flexibility and scalability \cite{park2017wireless}. However, it also introduces fundamental challenges at the control--communication interface, including the direct impact of wireless communication design on control performance and the influence of control design on wireless resource consumption. %Consequently, designing the wireless communication system and the MACS independently, without accounting for their strong two-way coupling, can significantly degrade both control performance and wireless resource efficiency \cite{han2023wireless}.

Traditionally, MACS designs abstract information exchange among agents as edges in a connectivity graph, while concrete wireless characteristics, such as bandwidth, channel quality, and resource allocation, are not explicitly modeled. Instead, multi-agent control designs typically characterize wireless communication using abstract metrics, such as delay and packet-loss bounds, while assuming that the underlying wireless network guarantees these bounds \cite{chen2019control}. Although such abstractions simplify control design, they do not reveal the wireless resources required to satisfy these communication requirements or how wireless resource allocation influences the resulting control performance. Meanwhile, wireless resources are typically provisioned conservatively based on worst-case communication requirements with additional reliability margins, resulting in inefficient bandwidth utilization. However, this approach is becoming increasingly unsustainable as bandwidth-intensive services, such as augmented reality/virtual reality and massive Internet of Things deployments, compete for limited wireless resources.
%Traditionally, MACS designs abstract the information exchange among agents as edges in a connectivity graph. However, concrete wireless-link specifications, such as bandwidth, channel quality, and resource allocation, are not explicitly modeled. Specifically, the multi-agent control designs typically account only for wireless impairments such as packet drops and delays. These designs typically rely on worst-case assumptions (e.g., delay bounds, loss rates) that the network infrastructure is presumed to guarantee \cite{chen2019control}. Nevertheless, the growing number of bandwidth-intensive applications (e.g., augmented reality/virtual reality streaming and massive Internet of Things deployments) makes conservative resource reservation for MACS increasingly unsustainable. 

Dynamic transmission scheduling approaches, such as joint (windowed) connectivity, self-triggered control, and event-triggered control, can reduce communication overhead in control systems. However, without careful co-design with the wireless network, the potential benefits of these approaches cannot be fully realized. In particular, insufficient wireless resources during periods of high communication demand may violate the communication requirements imposed by the control system, resulting in degraded control performance or even system instability \cite{tiberi2013energy}. Furthermore, these approaches alone cannot fully reduce wireless resource consumption because conventional wireless communication designs reserve bandwidth according to worst-case control requirements rather than the actual communication demand \cite{nowzari2019event}. For example, in MACSs, joint (windowed) connectivity relaxes the requirement for continuous communication among agents by allowing the communication topology to vary over time while preserving consensus (Fig.~\ref{joint}). Specifically, consensus is guaranteed as long as the union of the communication graphs over a finite time window remains connected \cite{ren2005consensus,moreau2005stability}. However, realizing these bandwidth savings requires joint design with the wireless network. Otherwise, consensus may be compromised due to insufficient wireless resources, and the potential bandwidth savings may be lost because resources are still provisioned for the worst case.
%This flexibility lowers resource consumption but complicates resource allocation, since the wireless network must dynamically ensure sufficient connectivity in every window. %Therefore, dynamic transmission scheduling approaches (e.g., self-triggered and event-triggered control) must jointly consider both the consensus and stability requirements of the control system and the resource constraints of the shared wireless network \cite{tiberi2013energy}.

Therefore, two critical research gaps must be addressed in the area of wireless-enabled MACSs: First, developing models for wireless-enabled MACS that explicitly capture two-way coupling between the multi-agent control and wireless communication; Second, co-designing multi-agent control and communication variables 
 to jointly satisfy dynamic requirements of control agents (CAs), the QoS needs of other wireless users, and the wireless network’s resource constraints. To address these gaps, we first develop an integrated modeling framework that captures the interaction between MACSs and wireless communication. Specifically, we consider a scenario in which CAs exchange information over existing wireless infrastructure (e.g., 6G) via device-to-device (D2D) links that share spectrum with cellular users (CUs) under limited bandwidth resources. Based on this framework, we formulate a joint optimization problem for consensus-topology selection and wireless resource allocation to minimize bandwidth consumption while guaranteeing consensus and satisfying the QoS requirements of both MACSs and CUs. 
 %In this system model, the QoS requirement of the MACSs is characterized by the admissible communication delay that preserves consensus. 
\begin{figure}[t]
    \centering
    \resizebox{0.85\columnwidth}{!}{%
    \begin{tikzpicture}[
        node/.style={circle, draw, inner sep=0.7pt, minimum size=7pt, font=\scriptsize},
        edget/.style={thick},
        edgu/.style={thick},
        title/.style={font=\scriptsize}
    ]

    %%% Compact positions %%%
    \def\xA{0}
    \def\xB{2.6}
    \def\xC{5.2}
    \def\xU{7.8}
    \def\y{0}

    %------------------ G(t1) ------------------%
    \node[title] at (\xA,1.2) {$\mathcal G(t_1)$};

    \node[node] (a1) at (\xA-0.6,\y+0.4) {$1$};
    \node[node] (a2) at (\xA+0.6,\y+0.4) {$2$};
    \node[node] (a3) at (\xA-0.3,\y-0.5) {$3$};
    \node[node] (a4) at (\xA+0.7,\y-0.5) {$4$};

    \draw[edget] (a1) -- (a2);
    \draw[edget] (a2) -- (a4);

    %------------------ G(t2) ------------------%
    \node[title] at (\xB,1.2) {$\mathcal G(t_2)$};

    \node[node] (b1) at (\xB-0.6,\y+0.4) {$1$};
    \node[node] (b2) at (\xB+0.6,\y+0.4) {$2$};
    \node[node] (b3) at (\xB-0.3,\y-0.5) {$3$};
    \node[node] (b4) at (\xB+0.7,\y-0.5) {$4$};

    \draw[edget] (b1) -- (b3);
    \draw[edget] (b3) -- (b4);

    %------------------ G(t3) ------------------%
    \node[title] at (\xC,1.2) {$\mathcal G(t_3)$};

    \node[node] (c1) at (\xC-0.6,\y+0.4) {$1$};
    \node[node] (c2) at (\xC+0.6,\y+0.4) {$2$};
    \node[node] (c3) at (\xC-0.3,\y-0.5) {$3$};
    \node[node] (c4) at (\xC+0.7,\y-0.5) {$4$};

    \draw[edget] (c2) -- (c3);

    %------------------ Union graph ------------------%
    \node[title] at (\xU,1.2) {$\bigcup_{k=1}^3 \mathcal G(t_k)$};

    \node[node] (u1) at (\xU-0.6,\y+0.4) {$1$};
    \node[node] (u2) at (\xU+0.6,\y+0.4) {$2$};
    \node[node] (u3) at (\xU-0.3,\y-0.5) {$3$};
    \node[node] (u4) at (\xU+0.7,\y-0.5) {$4$};

    % edges that appeared in some G(t_k)
    \draw[edgu] (u1) -- (u2);
    \draw[edgu] (u2) -- (u4);
    \draw[edgu] (u1) -- (u3);
    \draw[edgu] (u3) -- (u4);
    \draw[edgu] (u2) -- (u3);

    \end{tikzpicture}%
    }
    \caption{Joint connectivity: each graph $\mathcal G(t_k)$ may be
    disconnected, but their union is connected.}
    \label{joint}
\end{figure}
\subsection{Related Work} 
%This growing interest is driven by the tight coupling between wireless and control in CPS applications (e.g., smart grids, autonomous vehicle platoons, and electronic-health) where disjoint designs are impractical \cite{qiao2022communication}. 
In recent years, joint approaches that consider control and wireless communication systems, ranging from single-loop to large-scale multi-loop, have become an active research topic \cite{wang2025communication, eisen2019control, zeng2019joint, termehchi2020joint,  pang2025communication,  li2025distributed, diao2025task}. Generally, joint approaches can be categorized into three classes: control-aware wireless communication design, wireless-aware control design, and co-design of control and wireless communication. In control-aware wireless design, wireless variables (e.g., bandwidth and power allocation) are optimized under constraints that capture both control system requirements (e.g., stability) and communication requirements. For example, \cite{eisen2019control} maps control stability requirements to packet-delivery-rate constraints and jointly optimizes time–frequency scheduling and modulation-and-coding-scheme selection to guarantee low-latency and reliable wireless control. In wireless-aware control design, control variables (e.g., control inputs and control update rules) are determined to satisfy constraints that include requirements from control and wireless systems. For instance, \cite{zeng2019joint} studies control stability over a vehicle-to-vehicle wireless network with stochastic interference and Nakagami fading. The authors tune two control gains to satisfy delay constraints. However, in the co-design class, both wireless and control variables are jointly optimized while accounting for constraints from both domains, as in \cite{termehchi2020joint, pang2025communication,li2025distributed, diao2025task}. %, lima2022model,li2025novel}.
For example, the authors in \cite{termehchi2020joint} jointly optimize the control update rule and wireless resource allocation while satisfying control system stability and CUs QoS requirements. Similarly, the authors in \cite{pang2025communication} jointly optimize channel allocation and control input under control performance and communication resource constraints.

Although many studies have examined joint approaches to control and wireless communication, only a limited number specifically address the integration of multi-agent control with realistic wireless communication systems. In \cite{li2016event}, authors develop an event-triggered distributed average consensus algorithm for directed networks under strict communication bandwidth limits. Specifically, they use dynamic quantization to guarantee convergence with finite-bit transmissions. %The paper provides sufficient conditions to avoid quantizer saturation and shows that consensus can still be achieved even when each message contains only a small number of bits. 
The paper develops a mathematically rigorous dynamic quantizer for event-triggered consensus over digital channels; such quantization schemes can be incorporated into wireless-enabled MACSs. Authors in \cite{hong2019adaptive} propose a wireless-aware control design method in which each agent adaptively adjusts its communication range to maintain connectivity while significantly reducing communication power consumption during multi-agent consensus. However, the paper employs a simplified communication power-cost model, namely $w_i[k] = \epsilon\, d_i[k]^{\beta}$, which depends only on the communication range. In \cite{oliveira2021co}, the authors consider a dynamic multi-agent platoon composed of one leader and $n$ followers that communicate over vehicle ad hoc network (VANET) links. %Although the paper considers time-varying delays and packet losses at the communication level,  wireless characteristic (wireless interference) is incorporated only in the simulation setup.
Specifically, they propose a multi-agent controller that adapts to the wireless performance, making this paper a wireless-aware control design. To the best of our knowledge, there is no prior work on control-aware wireless design or co-design of control and wireless communication in the context of wireless-enabled MACSs. Consequently, existing studies do not (i) present a modeling framework that captures the tight, bidirectional coupling between MACSs and the wireless communication system, and (ii) co-design multi-agent control and wireless communication. %In addition, any such design must be inherently scalable with respect to the number of agents and wireless users. This requirement remains largely unaddressed and motivates the design developed in this paper.
\subsection{Contributions}
The main contributions of this paper are summarized as follows:
\begin{itemize}
\item We develop an integrated modeling framework for wireless-enabled MACSs that explicitly captures the two-way coupling between multi-agent control and wireless communication. In addition, we formulate a consensus-aware co-design optimization problem that jointly determines the time-varying consensus topology and wireless resource allocation to minimize bandwidth consumption while satisfying consensus and all QoS requirements.
\item  Despite the NP-hardness of the resulting mixed-integer nonlinear optimization problem, a structural property
of the optimal solution, formalized in \textbf{Lemma 2}, enables the feasible search space to be reduced without loss of optimality. This structural property provides theoretical insight into the interaction between the communication and control subsystems by showing that their joint optimization can be solved in two sequential stages with polynomial complexity per slot.
%We show that, despite the NP-hardness of the resulting mixed-integer nonlinear optimization problem, a structural property of the optimal solution enables a sequential two-stage optimization algorithm with polynomial complexity.
\item Simulation results on a smart power grid case study that coexists with CUs demonstrate significant bandwidth savings compared with existing approaches while maintaining consensus and satisfying all QoS requirements.
\end{itemize}

\section{System Model and Assumptions}\label{sec:system-model}

We consider a system model comprised of a base station (BS), $N_c$ cooperative CAs that together form a MACS, and $N_m$ mobile CUs. Let $\mathcal{N}_c \triangleq \{1,\dots,N_c\}$ denote the set of CAs and $\mathcal{N}_m \triangleq \{1,\dots,N_m\}$ denote the set of CUs. It is assumed that the $N_c$ CAs (e.g., robots, distributed energy resources) exchange state and control information over wireless D2D links. The interaction topology among the CAs is modeled as a time-varying undirected graph. We assume that the total system bandwidth $\omega$ is shared between the CAs and the CUs. All nodes (CAs and CUs) can transmit simultaneously, but they are allocated disjoint portions of the bandwidth. That is, each node is assigned a non-overlapping sub-band of $\omega$, so that no two simultaneous transmissions occupy overlapping frequency bands, thereby preventing interference in the frequency domain. In this system, the BS is responsible for allocating bandwidth subject to (i) consensus requirement of the MACS, (ii) minimum QoS requirements of the CAs and CUs, and (iii) total bandwidth budget limitation. Furthermore, time is partitioned into slots $T_k$, $k \in \mathbb{N}$. We denote by $T$ the total slots in the considered horizon. Within each slot $T_k$, the resource allocation and the MACS communication graph are assumed to be constant; however, across slots, they may vary. 

\subsection{Multi-Agent Control System Dynamics and Network Coupling}
Over time $t\in [T_k,T_{k+1})$, \reflectbox the dynamics of agent $i \in \mathcal{N}_c$ is modeled as:
\begin{align}
\dot{\mathbf x}_i(t)
&= \mathbf A_i^d\, \mathbf x_i(t)+ \mathbf{B}_i\,\mathbf{u}_i(t) \;+\;\mathbf E_i(t) \nonumber\\&\;+\;\sum_{j \in \mathcal{N}_c \setminus \{i\}} a_{ij}(T_k)\;\mathbf C\big(\mathbf x_j(t)- \mathbf x_i(t)\big),
\label{eq:local-dyn}
\end{align}
where variable $\mathbf{x}_i(t)\in\mathbb{R}^q$ denotes the state of agent $i$
at time $t$ and $q$ is number of state variables. A dot above a variable denotes its derivative with respect to time.  $\mathbf A_i^d\in\mathbb{R}^{q\times q}$  is state matrix of each agent $i$, $\mathbf C\in\mathbb{R}^{q\times q}$ is the coupling/consensus gain, $\mathbf B_i\in\mathbb{R}^{q\times m_i}$ maps control input to state, $m_i$ is number of input variables, and $\mathbf E_i(t)\in\mathbb{R}^{q}$ is an exogenous disturbance (e.g., modeling process noise). 
Besides, inter-agent information exchange uses D2D wireless links and is described by a time-varying undirected graph:
\begin{equation}
\mathcal{G}(T_k) \;=\; \big(\mathcal{V},\,\mathcal{E}(T_k)\big), 
\quad \mathcal{V}=\mathcal{N}_c,
\end{equation}
where $\mathcal{E}(T_k)\subseteq \mathcal{V}\times\mathcal{V}$ is the edge set, with symmetric
adjacency matrix $\mathbf A(T_k)=[a_{ij}(T_k)]\in\{0,1\}^{N_c\times N_c}$. For the undirected graph, $a_{ii}(T_k)=0$ and
$a_{ij}(T_k)=a_{ji}(T_k)$. Moreover, $a_{ij}(T_k)=1$ if and only if,
within slot $T_k$, $t\in [T_k,T_{k+1})$, the allocated D2D resources between agents $i$ and $j$
are sufficient to support mutual information exchange while satisfying the
admissible transmission-delay bound%or above the rate requirement $R_{\min}^{\mathrm{CA}}$and the corresponding delay is admissible, $\tau_{max}$,
; otherwise, $a_{ij}(T_k)=0$. This condition is formalized in \textbf{Lemma 1}.  
%Note that since we assume inter-agent communication graph $\mathcal{G}(T_k)$ is an undirected graph so $a_{ij}(T_k)=a_{ji}(T_k)$. 
The model in \eqref{eq:local-dyn} is a continuous-time multi-agent dynamics with piecewise-constant switching communication topology. The overall MACS in \eqref{eq:local-dyn} can be interpreted as a hybrid/switched system.

For model in \eqref{eq:local-dyn}, the graph Laplacian is defined as:
\begin{equation}
\mathbf L(T_k)\;=\; \mathbf \Delta(T_k)- \mathbf A(T_k),
\end{equation}
where $\mathbf \Delta(T_k)$ is the degree matrix and is defined as:
\[
\mathbf \Delta(T_k)=\mathrm{diag} \big(d_1(T_k),\dots,d_{N_c}(T_k)\big),
\quad d_i(T_k)=\sum_{j=1}^{N_c} a_{ij}(T_k).
\] Indeed, $a_{ij}(T_k)$ indicates the presence of a link and $d_i(T_k)$ counts how many neighbors agent $i$ has in slot $T_k$. In addition, matrix $\mathbf L(T_k)$ encodes the connectivity of the MACS. 
Let 
$\mathbf X(t)\!\coloneqq\!\col\!\big(\mathbf x_1(t),\dots,\mathbf x_{N_c}(t)\big)\in\mathbb{R}^{N_c q}$ and
$\mathbf E(t)\!\coloneqq\!\col\!\big(\mathbf E_1(t),\dots,\mathbf E_{N_c}(t)\big)$,
where $\col(\cdot)$ stacks its arguments column-wise. 
Additionally, let the block-diagonal matrices 
$\mathbf A^d\!\coloneqq\!\blkdiag(\mathbf A^d_1,\dots,\mathbf A^d_{N_c})\in\mathbb{R}^{N_c q\times N_c q}$, and $\mathbf B\!\coloneqq\!\blkdiag(\mathbf B_1,\dots,\mathbf B_{N_c})$. 
Using the Kronecker product $\otimes$, the stacked dynamics of the wireless-enabled MACS is \cite{olfati2007consensus,ren2008distributed}:%,zhang2011optimal
\begin{equation}
\dot{\mathbf X}(t)
=\Big(\mathbf A^d - \mathbf L(T_k)\!\otimes\! \mathbf C\Big)\,\mathbf X(t) + \mathbf B \mathbf U(t) +\mathbf E(t)\,
\qquad \forall t\in T_k,
\label{eq:stacked-dyn}
\end{equation}
where $\mathbf L(T_k)\in\mathbb{R}^{N_c\times N_c}$ is the graph Laplacian in slot $T_k$, $\mathbf C\in\mathbb{R}^{q\times q}$ is the coupling gain, and $\mathbf L(T_k)\!\otimes\! \mathbf C\in\mathbb{R}^{N_c q\times N_c q}$, matching the dimension of $\mathbf A^d$.
Here, the term $-\,\mathbf L(T_k)\!\otimes \mathbf C$ enforces consensus-type coupling: if two agents are connected ($a_{ij}(T_k)=1$), their states are driven toward agreement along the directions weighted by $\mathbf C$; if they are not connected, no direct coupling is applied in that slot.

\vspace{0.2cm}
\noindent\textbf{Assumption 1}. Each agent is internally stable; equivalently, the state matrix $\mathbf A_i^d$ is Hurwitz, meaning that all eigenvalues of $\mathbf A_i^d$ lie in the open left half-plane.
\subsection{Consensus in Multi-Agent Control System}
The objective of the consensus in a MACS is to make all control agents agree on a
common state despite having different initial conditions and local disturbances.
To quantify the consensus performance, the overall state of the MACS is decomposed
into two orthogonal components: the average component, representing the common
motion of all agents, and the disagreement component, representing the deviation
of individual agents from the average.
Let $\mathbf{1}\in\mathbb{R}^{N_c}$ denote the all-ones vector and define the
disagreement projector as
\[\mathbf \Pi
\triangleq
\mathbf I_{N_c}
-
\frac{1}{N_c}\mathbf 1\mathbf 1^\top .\]
For the stacked state
$\mathbf X(t)=\col(\mathbf x_1(t),\ldots,\mathbf x_{N_c}(t))
\in\mathbb{R}^{N_cq}$,
the average and disagreement vectors are defined, respectively, as
\begin{align}
\bar{\mathbf x}(t)
&=
\frac{1}{N_c}
\sum_{i=1}^{N_c}\mathbf x_i(t)
=
\left(
\frac{1}{N_c}\mathbf 1^\top\otimes\mathbf I_d
\right)\mathbf X(t),
\\
\tilde{\mathbf X}(t)
&=
(\mathbf\Pi\otimes\mathbf I_d)\mathbf X(t).
\label{disagree}
\end{align}
Consensus is achieved if the disagreement vanishes asymptotically,
\[
\|\tilde{\mathbf X}(t)\|
\rightarrow
0,
\qquad
t\rightarrow\infty,
\]
which is equivalent to
\(
\mathbf x_1(t)=\mathbf x_2(t)=\cdots=\mathbf x_{N_c}(t)
\rightarrow
\bar{\mathbf x}(t).\)
Using \eqref{eq:stacked-dyn} and the Laplacian property
$\mathbf 1^\top\mathbf L(T_k)=0$, the consensus coupling does not influence the
average part. Consequently, the average state evolves according to
\begin{equation}
\dot{\bar{\mathbf x}}(t)
=
\bar{\mathbf A}^d\bar{\mathbf x}(t)
+
\bar{\mathbf E}(t),
\label{eq:avg-dynamics}
\end{equation}
where
$\bar{\mathbf A}^d
=
\frac{1}{N_c}
\sum_{i=1}^{N_c}\mathbf A_i^d$
and
$\bar{\mathbf E}(t)
=
\frac{1}{N_c}
\sum_{i=1}^{N_c}\mathbf E_i(t)$.
Therefore, the role of the consensus coupling
$\mathbf L(T_k)\otimes\mathbf C$
is solely to reduce the disagreement
$\tilde{\mathbf X}(t)$, thereby driving all agents toward the common network
trajectory.

In what follows, we first define a connected undirected graph and its algebraic connectivity, then introduce the union graph over a window of slots. These concepts are used to characterize the conditions on the communication graph under which consensus is guaranteed in the MACS.
\begin{definition}[Connected undirected graph]
Consider an undirected graph $\mathcal{G} = (\mathcal{V}, \mathcal{E})$, where 
$\mathcal{V} = \{1,\dots,N_c\}$ is the set of agents and 
$\mathcal{E} \subseteq \mathcal{V} \times \mathcal{V}$ is the set of undirected edges.
We say that $\mathcal{G}$ is connected if, for every pair of distinct agents
$i,j \in \mathcal{V}$, there exists a path from $i$ to $j$, i.e., a finite sequence of agents
\(
i = v_0, v_1, v_2, \dots, v_q = j
\)
such that $(v_{p-1}, v_p) \in \mathcal{E}$ for all $p = 1,\dots,q$.
Equivalently, all agents belong to a single connected component.
\end{definition}

\begin{definition}[Algebraic connectivity \cite{olfati2007consensus}]
For a symmetric Laplacian matrix $\mathbf L\in\mathbb{R}^{N_c\times N_c}$ with eigenvalues
$0=\lambda_1(\mathbf{L})\le\lambda_2(\mathbf{L})\le\cdots\le\lambda_{N_c}(\mathbf{L})$,
the scalar $\lambda_2(\mathbf{L})$ is called the algebraic connectivity of the graph.
For undirected graphs with nonnegative edge weights, $0<\lambda_2(\mathbf{L})$ if and only if the graph is connected.
\end{definition}

\begin{definition}[Union graph over a window of slots {\cite{jadbabaie2003coordination}}]
Let $\mathcal{G}(T_k)=(\mathcal{V},\mathcal{E}(T_k))$ denote the undirected inter-agent communication graph in slot $T_k$ with Laplacian $\mathbf{L}(T_k)$.
For an integer $K\ge 1$, union graph over slots $T_k,\dots,T_{k+K-1}$ is:
\[
\mathcal{G}_{[k:k+K-1]}
\;\triangleq\;
\big(\mathcal{V},\,
\bigcup_{\ell=k}^{k+K-1} \mathcal{E}(T_\ell)\big),
\]
and its aggregated Laplacian is
\[
\mathbf{L}_{[k:k+K-1]}
\;\triangleq\;
\sum_{\ell=k}^{k+K-1} \mathbf{L}(T_\ell).
\]
For the undirected graph, $0<\lambda_2(\mathbf{L}_{[k:k+K-1]})$ if and only if the union graph $\mathcal{G}_{[k:k+K-1]}$ is connected.
\end{definition}

We now characterize when consensus is guaranteed.

\begin{theorem}
\label{thm:consensus-windowed}
 {\cite{ren2008distributed,scardovi2008synchronization}}
Consider the wireless-enabled MACS with stacked dynamics (\ref{eq:stacked-dyn}) and suppose the following conditions hold:
\begin{itemize}
    \item[(A1)] \textbf{Internal stability of agents.}
    Each agent is internally stable: $\mathbf A^d$ is Hurwitz.
    %Equivalently, there exists $P_i \succ 0$ such that $(A_i^d)^\top P_i + P_i A_i^d \prec 0$.
    \item[(A2)] \textbf{Positive coupling gain.}
    The coupling gain $\mathbf C$ is symmetric and positive definite, with:
    \(
    \mathbf C \succeq \underline{c}\, \mathbf I_d
    \quad \text{for some }\underline{c} > 0.
    \)
    \item[(A3)] \textbf{Joint (windowed) connectivity.}
     There exist an integer $\tilde{K} \ge 1$ and a constant $\underline{\lambda}_{\tilde{K}}>0$ such that,
     for any starting index $k \in \mathbb{N}$,
    \begin{equation}
    \lambda_2\!\Bigg(\sum_{\ell=k}^{k+\tilde{K}-1} \mathbf L(T_\ell)\Bigg)
    \;\ge\;
    \underline{\lambda}_{\tilde{K}}.
    \label{eq:windowed-lambda2-thm}
    \end{equation}
    Equivalently, the union graph over any $\tilde{K}$ consecutive slots is connected.
\end{itemize}
Then, in the disturbance-free case $\mathbf E(t)\equiv 0$, the disagreement $\tilde {\mathbf{X}}(t) = (\mathbf \Pi\!\otimes\!\mathbf{I}_d)\mathbf{X}(t)$
converges to zero exponentially. In particular,
\[
\|\tilde{\mathbf{X}}(t)\|
\;\le\;
c\,e^{-\alpha t}\,\|\tilde{\mathbf{X}}(0)\|
\quad \text{for some } c>0,\ \alpha>0,
\]
and thus:
\(
\mathbf x_1(t)=\mathbf x_2(t)=\cdots= \mathbf x_{N_c}(t)
\;\longrightarrow\;
\bar{\mathbf{x}}(t),
\)
where $\bar{\mathbf{x}}(t)$ evolves according to averaged stable self-dynamics \eqref{eq:avg-dynamics}.
\end{theorem}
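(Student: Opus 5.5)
The plan is a common quadratic Lyapunov function for the switched system \eqref{eq:stacked-dyn} with $\mathbf U\equiv 0$ and $\mathbf E\equiv 0$, applied first to the full state and then to the disagreement $\tilde{\mathbf X}(t)$. (A1) gives exponential decay even without connectivity; (A3) is used only to sharpen the decay rate.

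\textbf{Step 1 (Lyapunov candidate).} I fix $\mathbf P\succ 0$ such that:
\begin{itemize}
\item every $\mathbf A_i^d$ satisfies $(\mathbf A_i^d)^\top\mathbf P+\mathbf P\mathbf A_i^d\preceq-\mu\mathbf P$ for some $\mu>0$;
\item the coupling is dissipative, $\mathbf P\mathbf C+\mathbf C\mathbf P\succeq 0$.
\end{itemize}
For identical agents, or agents sharing a Lyapunov matrix, I would take $\mathbf P$ to be that matrix. The simplest admissible instance is $\mathbf P=\mathbf I_d$ when the symmetric parts of the $\mathbf A_i^d$ are negative definite, since then (A2) makes the second condition automatic.

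\textbf{Step 2 (decay of the full state).} Set $V(\mathbf X)=\mathbf X^\top(\mathbf I_{N_c}\otimes\mathbf P)\mathbf X$. Along \eqref{eq:stacked-dyn} this gives
\[
\dot V\le-\mu V-\mathbf X^\top\big(\mathbf L(T_k)\otimes(\mathbf P\mathbf C+\mathbf C\mathbf P)\big)\mathbf X .
\]
The second term is nonpositive because $\mathbf L(T_k)\succeq 0$ and the Kronecker product of two positive semidefinite matrices is positive semidefinite. Hence $V(t)\le e^{-\mu t}V(0)$ holds uniformly over every switching sequence. Since $\|(\mathbf\Pi\otimes\mathbf I_d)\mathbf X\|\le\|\mathbf X\|$, this already yields $\|\tilde{\mathbf X}(t)\|\le c\,e^{-\alpha t}\|\mathbf X(0)\|$.

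\textbf{Step 3 (bound in terms of $\|\tilde{\mathbf X}(0)\|$).} To obtain the stated dependence on $\|\tilde{\mathbf X}(0)\|$ rather than $\|\mathbf X(0)\|$, I work with the disagreement directly. Using $\mathbf L\mathbf 1=0$ and $\mathbf\Pi\mathbf L=\mathbf L\mathbf\Pi=\mathbf L$, in the homogeneous case $\mathbf A_i^d\equiv\mathbf A$ one obtains
\[
\dot{\tilde{\mathbf X}}=\big(\mathbf I\otimes\mathbf A-\mathbf L(T_k)\otimes\mathbf C\big)\tilde{\mathbf X},
\]
and repeats Step 2 with $\tilde V=\tilde{\mathbf X}^\top(\mathbf I\otimes\mathbf P)\tilde{\mathbf X}$. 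For heterogeneous agents, the cross term $(\mathbf\Pi\otimes\mathbf I)\mathbf A^d\,(\tfrac1{N_c}\mathbf 1\mathbf 1^\top\otimes\mathbf I)\mathbf X$ couples the average into the disagreement. I would handle it as an exponentially decaying input, since $\bar{\mathbf x}$ decays by Step 2, and finish with a comparison-lemma/ISS bound.

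\textbf{Step 4 (rate improvement from (A3)).} To show that connectivity genuinely accelerates agreement, I integrate $\dot{\tilde V}$ over a window of $\tilde K$ slots. I assume slot lengths are bounded below by $\tau>0$, and use continuity of the trajectory within the window to compare $\tilde{\mathbf X}(t)$ with $\tilde{\mathbf X}(T_k)$. The dissipated energy is then at least a constant times $\tilde{\mathbf X}(T_k)^\top\big(\mathbf L_{[k:k+\tilde K-1]}\otimes\mathbf I\big)\tilde{\mathbf X}(T_k)$. By (A2), \eqref{eq:windowed-lambda2-thm}, and $\tilde{\mathbf X}\perp\mathbf 1\otimes\mathbb R^d$, this is at least $\underline c\,\underline\lambda_{\tilde K}\|\tilde{\mathbf X}(T_k)\|^2$. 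The result is a per-window contraction factor $\rho<1$ that depends on $\underline c\,\underline\lambda_{\tilde K}$.

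\textbf{Main obstacle.} The hardest step is Step 1: the existence of a common $\mathbf P$ compatible with both the heterogeneous $\mathbf A_i^d$ and $\mathbf C$. Hurwitzness of each block alone does not guarantee stability under arbitrary switching. I would therefore state explicitly the standard common-Lyapunov hypothesis implicit in the cited results \cite{ren2008distributed,scardovi2008synchronization}, which holds for instance with $\mathbf P=\mathbf I_d$ when the blocks are dissipative. A secondary difficulty is the perturbation estimate in Step 4, which needs a uniform dwell time to avoid degeneracy.
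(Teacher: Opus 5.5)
The paper gives no argument of its own; it only cites Ren--Beard and Scardovi--Sepulchre, which treat identical agents whose coupling is designed to be compatible with the agent dynamics. Your direct common-Lyapunov route is therefore genuinely different.

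\textbf{The hypotheses really are insufficient.} Your diagnosis is right, and the failure is more basic than you say: it needs neither switching nor heterogeneity. Take two identical agents joined by a permanent edge, so (A3) holds with $\tilde K=1$ and $\lambda_2(\mathbf L)=2$. Let $\mathbf A_i^d=\begin{pmatrix}-1&-15\\0&-1\end{pmatrix}$, which is Hurwitz, and $\mathbf C=\begin{pmatrix}1&1/2\\1/2&1\end{pmatrix}\succeq\tfrac12\mathbf I$. Then $\mathbf e=\mathbf x_1-\mathbf x_2$ obeys $\dot{\mathbf e}=(\mathbf A_i^d-2\mathbf C)\mathbf e$, whose eigenvalues are $1$ and $-7$. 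The disagreement therefore diverges.

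So your condition (a common $\mathbf P$ with $\mathbf P\mathbf C+\mathbf C\mathbf P\succeq 0$) is a genuine correction of the theorem's hypotheses. It is not a standard assumption that the references leave implicit, and it is needed even for a fixed connected graph. The same applies to your choice $\mathbf U\equiv 0$, about which the statement is silent. Under these added hypotheses your Steps 1--2 are correct, and you are right that (A3) then does no work.

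\textbf{The gap: heterogeneous agents in Step 3.} No comparison-lemma or ISS argument can produce the stated bound in terms of $\|\tilde{\mathbf X}(0)\|$, because that bound is false for non-identical agents.
\begin{itemize}
\item If the $\mathbf A_i^d$ are not all equal, choose $\mathbf x_0$ so that the vectors $\mathbf A_i^d\mathbf x_0$ are not all equal, and set $\mathbf X(0)=\mathbf 1\otimes\mathbf x_0$.
\item Then $\tilde{\mathbf X}(0)=0$, and the coupling term $(\mathbf L\otimes\mathbf C)(\mathbf 1\otimes\mathbf x_0)$ vanishes.
\item However, $\dot{\tilde{\mathbf X}}(0)=(\mathbf \Pi\otimes\mathbf I)\col(\mathbf A_1^d\mathbf x_0,\dots,\mathbf A_{N_c}^d\mathbf x_0)\neq 0$.
\end{itemize}
Any ISS bound must therefore carry a term in $\|\bar{\mathbf x}(0)\|$. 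The correct conclusion is your Step 2 bound $\|\tilde{\mathbf X}(t)\|\le c\,e^{-\alpha t}\|\mathbf X(0)\|$. The $\|\tilde{\mathbf X}(0)\|$ form holds only in your homogeneous case.

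\textbf{The clause on $\bar{\mathbf x}$.} The final clause, that $\bar{\mathbf x}$ obeys \eqref{eq:avg-dynamics}, is exact only for identical agents, and you do not address it. In general
$\dot{\bar{\mathbf x}}=\bar{\mathbf A}^d\bar{\mathbf x}+\tfrac{1}{N_c}\sum_i\mathbf A_i^d(\mathbf x_i-\bar{\mathbf x})$.
Your hypothesis does make $\bar{\mathbf A}^d$ Hurwitz, since it inherits the Lyapunov inequality, and it makes the extra term decay. So an asymptotic version of the clause holds.

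\textbf{Step 4.} This step is not needed for the theorem. If you keep it, two repairs are required:
\begin{itemize}
\item You need $\mathbf P\mathbf C+\mathbf C\mathbf P\succ 0$ strictly; (A2) gives this only for $\mathbf P=\mathbf I$.
\item Over a window of fixed length the drift moves the state. You must therefore compare $\tilde{\mathbf X}(t)$ with $(\mathbf I\otimes e^{\mathbf A(t-T_k)})\tilde{\mathbf X}(T_k)$, where $\mathbf A$ is the common agent matrix, rather than with $\tilde{\mathbf X}(T_k)$ itself.
\end{itemize}
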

\begin{proof}
    See \cite{ren2008distributed,scardovi2008synchronization}.
\end{proof}
\noindent \textbf{Assumption 2}. The wireless-enabled MACS in \eqref{eq:stacked-dyn} satisfies conditions {(A1)} and {(A2)} of \textbf{Theorem~\ref{thm:consensus-windowed}}.

%\begin{remark}[Lower bound on algebraic connectivity]
%For undirected graphs with nonnegative edge weights, equation \eqref{eq:windowed-lambda2-thm} enforces that the aggregated union graph over each $\tilde{K}$-slot window is connected, and moreover provides a quantitative lower bound $\underline{\lambda}_{\tilde{K}}$ on its algebraic connectivity.
%\end{remark}

\begin{remark}[Effect of disturbances {\cite{ren2008distributed}}] Assume  $\mathbf E(t)$ is bounded, i.e., $\|\mathbf E(t)\|\le \bar E < \infty$ for all $t$.
Under (A1)--(A3), the disagreement dynamics are input-to-state stable:
$\tilde{\mathbf{X}}(t)$ remains bounded and can be made arbitrarily small by increasing either the coupling level $\underline{c}$ in (A2) or the windowed connectivity level $\underline{\lambda}_{\tilde{K}}$ in (A3).
\end{remark}
\begin{remark}[Consensus speed]
Condition (A3) guarantees that all agents eventually agree. We can also discuss how fast they agree. Under (A1)--(A3), the disagreement $\tilde{\mathbf{X}}(t)$ between
agents decays approximately like $e^{-\alpha t}$ for some $\alpha>0$, i.e., it shrinks
at an exponential rate. This rate $\alpha$ becomes larger (faster convergence) when:
\begin{itemize}
    \item the coupling gain $\mathbf C$ is stronger, i.e., $\underline{c}$ in (A2) is larger. Intuitively,
    each agent reacts more aggressively to differences between its state and its neighbors'
    states,
    \item the network is better connected over time, i.e., $\underline{\lambda}_{\tilde{K}}$ in (A3)
    is larger or the horizon $\tilde{K}$ is smaller. Intuitively, information can propagate across
    the entire set of agents more quickly because there are enough reliable D2D links
    activated across the short window of $\tilde{K}$ slots.
\end{itemize}
%In summary: stronger pairwise coupling and more persistent connectivity both make the agents reach consensus faster.
\end{remark}
\begin{remark}[On switching graphs]
Conditions (A1)--(A3) do not require each $\mathbf L(T_k)$ to correspond to a connected graph. Instead, they require that the union of the active D2D links over any window of $\tilde{K}$ consecutive slots be connected.
%to correspond to a connected graph. Instead, they require that, over any window of $\tilde{K}$ consecutive slots, the union of active D2D links is connected. %This is the standard uniformly jointly connected condition for consensus over switching topologies in MACSs; see, e.g., \cite{ren2008distributed,jadbabaie2003coordination}.
%\cite{RenBeardTAC2005,ShiJohanssonSIAMJCO2013}.
\end{remark}
\subsection{Wireless Communication Subsystem}
As mentioned earlier, the wireless communication system consists of one BS, $N_c$ CAs, and $N_m$ mobile CUs. We assume that CAs communicate via D2D links, and that cellular communication and D2D communication occur simultaneously. In addition, we assume that %the total system bandwidth $W>0$ (Hz), which is partitioned among all active links in the network during slot $T_k$. 
the total system bandwidth $\omega>0$ (Hz) is shared in each slot $T_k$ by (i) D2D links between CAs, and
(ii) BS--CU links (uplink or downlink). For each unordered CA pair $(i,j)$ with $i,j\in\mathcal{N}_c$ and $i<j$, we allocate a normalized bandwidth share
\(
b_{ij}(T_k)\in[0,1],
\)
which corresponds to an absolute bandwidth
\(
B_{ij}(T_k)\triangleq b_{ij}(T_k)\, \omega\quad\text{[Hz]}.
\)
For each cellular user $u\in\mathcal{N}_m$, we allocate a normalized share
\(
b_{u}(T_k)\in[0,1],
\)
with absolute bandwidth
\(
B_{u}(T_k)\triangleq b_{u}(T_k)\, \omega \quad\text{[Hz]}.
\)
All allocated sub-bands are assumed orthogonal (non-overlapping in frequency),
so simultaneously served links do not interfere in-band.
The total bandwidth used in slot $T_k$ must satisfy the per-slot bandwidth budget:
\begin{equation}
\sum_{\substack{i,j\in\mathcal{N}_c\\ i<j}} b_{ij}(T_k)
\;+\;
\sum_{u\in\mathcal{N}_m} b_{u}(T_k)
\;\le\; 1,\qquad \forall\,k.
\label{eq:BW-budget-ULDL}
\end{equation}
Furthermore, the channels between the CAs, as well as the channels between the BS and the CUs, are modeled as Rayleigh fading channels.
%since we consider no line of sight. 
%Although wireless channels typically exhibit temporal correlation, we assume that within each time slot $T_k$, the channel state remains constant. %Let $h_{ij}(T_k)$ be the complex channel gain between CAs $i$ and $j$ in slot $T_k$ and $h_{u}(T_k)$ denote the channel gain between CU $u$ and the BS in slot $T_k$. The $h_{ij}(T_k)$ and $h_{u}(T_k)$ are defined as: 
Let $h_{ij}(T_k)$ denote the complex channel gain between CAs $i$ and $j$
in slot $T_k$, for $i,j \in \mathcal{N}_c$, which
%$h_{ij}(T_k)$ 
can be defined as:
\begin{equation}
h_{ij}(T_k)
\;=\;
{ d_{ij}^{-\eta/2} }\;\tilde{h}_{ij}(T_k),
\label{eq:channel-CA-CA}
\end{equation}
where $d_{ij}$ is the physical distance between agents $i$ and $j$, $\eta > 0$ is the path-loss exponent, and $\tilde{h}_{ij}(T_k) \sim \mathcal{CN}(0,1)$ models small-scale fading. %Thus $d_{ij}^{-\eta}$ captures large-scale attenuation (path loss), and $\tilde{h}_{ij}(T_k)$ captures slot-$T_k$ small-scale fading.
In addition, let $h_u(T_k)$ denote the complex channel gain between the BS and CU $u \in \mathcal{N}_m$
in slot $T_k$.
Similarly, $h_u(T_k)$ can be defined as:
\begin{equation}
h_u(T_k)
\;=\;
d_{u}^{-\eta/2}(T_k)\;\tilde{h}_u(T_k),
\label{eq:channel-BS-CU}
\end{equation}
where $d_u(T_k)$ is the distance between CU $u$ and the BS in slot $T_k$, $\eta > 0$ is again the path-loss exponent, $\tilde{h}_u(T_k) \sim \mathcal{CN}(0,1)$ is the  small-scale fading coefficient in slot $T_k$.
%Both $\tilde{h}_{ij}(T_k)$ and $\tilde{h}_u(T_k)$ are assumed independent across different slots.

Suppose agent $i$ transmits to agent $j$ using constant transmit power $P_{CA}$ [W]. Then
the received signal-to-noise ratio (SNR) at agent $j$ is:
\begin{equation}
\mathrm{SNR}_{i \to j}(T_k)
\;=\;
\frac{
P_{CA}\,\big|h_{ij}(T_k)\big|^2
}{
N_0\,B_{ij}(T_k)
},
\label{eq:SNR-i-to-j}
\end{equation}
where $N_0$ [W/Hz] denotes the noise power spectral density and $h_{ij}(T_k)$ is given by \eqref{eq:channel-CA-CA}. Additionally, we assume that, within each slot $T_k$, the D2D exchange between agents $i$ and $j$ is half-duplex and time-shared. Hence, the slot is divided into two equal sub-intervals of duration $T_s/2$, one used for transmission $i \to j$ and the other for $j \to i$, while both sub-intervals reuse the same allocated bandwidth $B_{ij}(T_k)$.
Accordingly, the corresponding Shannon rate from $i$ to $j$ in slot $T_k$ is
\begin{align}
R_{i \to j}(T_k)
\;=\;
\frac{1}{2} B_{ij}(T_k)\;
\log_2 \!\Big( 1 + \mathrm{SNR}_{i \to j}(T_k) \Big). 
\label{eq:rate-i-to-j}    
\end{align}
Similarly, agent $j$ transmits to agent $i$ in the same slot with constant power $P_{CA}$ [W], we obtain
\begin{align}
R_{j \to i}(T_k)
&=
\frac{1}{2} B_{ij}(T_k)\;
\log_2\!\Big(1+\mathrm{SNR}_{j \to i}(T_k)\Big),
\label{eq:rate-j-to-i}
\end{align}
where $\mathrm{SNR}_{j \to i}(T_k)
=\frac{P_{CA}\,\big|h_{ij}(T_k)\big|^2
}{N_0\,B_{ij}(T_k)}$. Here, we have used the same complex gain $h_{ij}(T_k)$ for both directions, which is consistent with the symmetric (undirected graph) D2D model between agents $i$ and $j$. 

To prevent communication delays on D2D links from destabilizing
the consensus dynamics~\eqref{eq:stacked-dyn}, the one-way transmission delay on every
active link must not exceed the admissible delay margin. Following \cite{olfati2004consensus}, the admissible delay margin is:
\begin{equation}
\tau^{*}(T_k)
\;=\;
\frac{\pi}{2\,\lambda_{\max}\!\left(\mathbf{L}(T_k)\right)},
\label{eq:tau-star}
\end{equation}
where $\lambda_{\max}\!\left(\mathbf{L}(T_k)\right)$ denotes the largest
eigenvalue of the graph Laplacian $\mathbf{L}(T_k)$.
Specifically, if the transmission delay on every active D2D link
satisfies $\tau_{ij}(T_k) < \tau^{*}(T_k)$
\cite{olfati2004consensus}, the consensus dynamics
in~\eqref{eq:stacked-dyn} remain asymptotically stable despite
the transmission delay, and the delay-free model in~\eqref{eq:local-dyn} provides an accurate representation of the true system.
Based on this condition, the minimum CA rate $R_{\min}^{\mathrm{CA}}$ for all active links and all slots $T_k$
is established in the following lemma.

\begin{lemma}[Minimum CA Rate]
\label{lem:Rmin-CA}
Let $N_c = |\mathcal{N}_c|$ be the number of agents
in the D2D communication graph $\mathcal{G}(T_k)$,
and let $L_{\mathrm{pkt}}$ [bits] denote the control-packet
size. If the minimum CA--CA rate $R_{\min}^{\mathrm{CA}}$ satisfies
\begin{equation}
R_{\min}^{\mathrm{CA}}
\;>\;
\frac{4\,L_{\mathrm{pkt}}\,(N_c-1)}{\pi},
\label{eq:Rmin-CA-bound}
\end{equation}
then the packet transmission delay
$\tau_{ij}(T_k) = L_{\mathrm{pkt}}/R_{i\to j}(T_k)$
on every active D2D link satisfies
$\tau_{ij}(T_k) < \tau^{*}(T_k)$ for all slots $T_k$,
and the transmission delay does not destabilize the consensus dynamics in~\eqref{eq:stacked-dyn}.
\end{lemma}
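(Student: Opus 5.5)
The plan is to compare the actual delay with a lower bound on the admissible margin $\tau^{*}(T_k)$ in \eqref{eq:tau-star} that holds for every graph on $N_c$ vertices. The margin depends on the unknown slot topology only through $\lambda_{\max}(\mathbf L(T_k))$. So the first step is to bound this eigenvalue above, uniformly over all undirected graphs on $N_c$ vertices with $0/1$ weights.

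I would use the Gershgorin disc theorem applied to $\mathbf L(T_k)=\mathbf\Delta(T_k)-\mathbf A(T_k)$. The $i$-th disc is centered at $d_i(T_k)$ with radius $\sum_{j\neq i}a_{ij}(T_k)=d_i(T_k)$, so every eigenvalue lies in $[0,2d_{\max}(T_k)]$. Since each agent has at most $N_c-1$ neighbours, this gives
\[
\lambda_{\max}(\mathbf L(T_k))\le 2\,d_{\max}(T_k)\le 2(N_c-1).
\]
The sharper bound $\lambda_{\max}\le N_c$, attained by the complete graph, is also available, but the Gershgorin bound is the one that reproduces the constant in \eqref{eq:Rmin-CA-bound}. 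Substituting into \eqref{eq:tau-star} then gives the topology-independent bound
\[
\tau^{*}(T_k)\ge \frac{\pi}{4(N_c-1)}\quad\text{for all }k.
\]
If the slot graph has no edges, no link is active and the claim is vacuous. Otherwise $\lambda_{\max}>0$ and the expression is well defined.

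Next I would bound the delay on every active link. By the construction of $a_{ij}(T_k)$, an active link has its allocated resources chosen so that $R_{i\to j}(T_k)\ge R_{\min}^{\mathrm{CA}}$. Therefore
\[
\tau_{ij}(T_k)=\frac{L_{\mathrm{pkt}}}{R_{i\to j}(T_k)}\le \frac{L_{\mathrm{pkt}}}{R_{\min}^{\mathrm{CA}}}.
\]
Combining this with hypothesis \eqref{eq:Rmin-CA-bound} yields
\[
\tau_{ij}(T_k)<\frac{\pi}{4(N_c-1)}\le\tau^{*}(T_k).
\]
The same argument applies to $R_{j\to i}$ by symmetry. The stability conclusion then follows directly from the delay-robustness result of \cite{olfati2004consensus} quoted before the lemma.

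The main obstacle is interpretive rather than computational. The lemma needs the property that an active link must have rate at least $R_{\min}^{\mathrm{CA}}$, which is a modelling requirement (enforced later as a QoS constraint) rather than something derived. I would state it explicitly as part of the definition of $a_{ij}(T_k)=1$.

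A secondary subtlety concerns the delay-margin result of \cite{olfati2004consensus}. It is stated for fixed topologies with uniform delay. Applying it slot by slot to the switched system \eqref{eq:stacked-dyn} is taken as given here, as the paper does, rather than proved.
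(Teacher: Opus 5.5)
Your proposal is correct and follows essentially the same route as the paper: bound the delay on every active link by $L_{\mathrm{pkt}}/R_{\min}^{\mathrm{CA}}$ using the adjacency rule, use Gershgorin to get $\lambda_{\max}(\mathbf{L}(T_k))\le 2(N_c-1)$ and hence $\tau^{*}(T_k)\ge \pi/(4(N_c-1))$, and then rearrange the hypothesis. Your side remarks are accurate points the paper leaves implicit: the empty-graph case, the sharper bound $\lambda_{\max}\le N_c$, and the slot-by-slot use of the fixed-topology delay-margin result.
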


\begin{proof}
By definition, link $(i,j)$ is active in slot $T_k$ only
if $R_{i\to j}(T_k) \geq R_{\min}^{\mathrm{CA}}$, bounding
the one-way transmission delay by
\begin{equation}
\tau_{ij}(T_k)
\;=\;
\frac{L_{\mathrm{pkt}}}{R_{i\to j}(T_k)}
\;\leq\;
\tau_{\max}
\;\coloneqq\;
\frac{L_{\mathrm{pkt}}}{R_{\min}^{\mathrm{CA}}}.
\label{eq:proof-tau-max}
\end{equation}
Applying the Gershgorin circle
theorem~\cite{olfati2004consensus} to each row $i$ of
$\mathbf{L}(T_k)$, with diagonal entries
$l_{ii}=d_i(T_k)\geq 0$ and off-diagonal entries
$l_{ij}=-a_{ij}(T_k)\leq 0$:
\begin{equation}
\big|\lambda - d_i(T_k)\big|
\;\leq\;
\sum_{j \neq i}|l_{ij}|
\;=\;
d_i(T_k),
\end{equation}
so all eigenvalues of $\mathbf{L}(T_k)$ lie in
$[0,\,2d_i(T_k)]$.
Since each agent has at most $N_c-1$ neighbors,
$d_{\max}(T_k) \leq N_c - 1$, giving
\begin{equation}
\lambda_{\max}\!\left(\mathbf{L}(T_k)\right)
\;\leq\;
2\,d_{\max}(T_k)
\;\leq\;
2\,(N_c-1).
\label{eq:proof-lmax-bound}
\end{equation}
Substituting into~\eqref{eq:tau-star}:
\(
\tau^{*}(T_k)
\;\geq\;
\frac{\pi}{4\,(N_c-1)}.
\label{eq:tau-star-bound}
\)
The condition $\tau_{\max} < \tau^{*}(T_k)$ then holds
for all slots $T_k$ if
\begin{equation}
\frac{L_{\mathrm{pkt}}}{R_{\min}^{\mathrm{CA}}}
\;<\;
\frac{\pi}{4\,(N_c-1)},
\end{equation}
which rearranges to~\eqref{eq:Rmin-CA-bound}.
Hence $\tau_{ij}(T_k) \leq \tau_{\max} < \tau^{*}(T_k)$
for all active links and all slots $T_k$.%, and the conclusion follows from~\cite{olfati2004consensus}.
\end{proof}
%Indeed, we require that both directions can deliver at least the state/control information payload $R_{\min}^{\mathrm{CA}} T_s$ per slot, where $T_s$ is slot duration. As $R_{i\to j}(T_k)$ and $R_{j\to i}(T_k)$ are rates (bits/s), we have:
%\begin{equation}
%R_{i \to j}(T_k) \;\ge\; R_{\min}^{\mathrm{CA}}\quad\text{and}\quadR_{j \to i}(T_k) \;\ge\; R_{\min}^{\mathrm{CA}}.
%\label{rate_con}
%\end{equation}
Therefore, for each unordered pair of distinct agents $(i,j)$ with $i\neq j$, we define the symmetric adjacency matrix's entry (i.e. $a_{ij}(T_k)=a_{ji}(T_k)$) according to whether the D2D resources allocated between agents $i$ and $j$ are sufficient to support mutual information exchange while satisfying the admissible transmission-delay bound during slot $T_k$:
%reliably exchange state/control information during slot $T_k$:
\begin{equation}
a_{ij}(T_k)=
\begin{cases}
1, & \displaystyle \min\!\big\{ R_{i \to j}(T_k),\, R_{j \to i}(T_k) \big\} \;\ge\; R_{\min}^{\mathrm{CA}},\\[4pt]
0, & \text{otherwise},
\end{cases}
\label{graph_element}
\end{equation}
for all $i\neq j$, and we set $a_{ii}(T_k)=0$. %Here $R_{i \to j}(T_k)$ and $R_{j \to i}(T_k)$ denote the achievable D2D rates (bits/s) from $i$ to $j$ and from $j$ to $i$, respectively, under the allocated bandwidths and fixed transmit powers in slot $T_k$.
For each active D2D pair $(i,j)$, the link is symmetric%and both directions share the same allocated bandwidth $B_{ij}(T_k)$ and equal transmit power $P^{\mathrm{CA}}$
, consequently:
\(
R_{i \to j}(T_k)=R_{j \to i}(T_k)\triangleq R_{ij}(T_k),\)
%\label{eq:symmetric-rate}
%\end{equation}
and the adjacency rule in \eqref{graph_element} simplifies:
\begin{equation}
a_{ij}(T_k)=
\begin{cases}
1, & R_{ij}(T_k)\ge R_{\min}^{\mathrm{CA}},\\[3pt]
0, & \text{otherwise},
\end{cases}
\quad a_{ii}(T_k)=0.
\label{eq:symmetric-adj}
\end{equation}
%\vspace{-25pt}
Now consider CU $u \in \mathcal{N}_m$ in the uplink direction (CU~$\to$~BS).
%The bandwidth allocated to CU $u$ for uplink in slot $T_k$ is
%\[
%B_u^{\mathrm{UL}}(T_k) \;=\; b_u^{\mathrm{UL}}(T_k)\,W \quad \text{[Hz]}.
%\]
Let $P_u^{\mathrm{UL}}$ [W] be the CU's uplink transmit power in that slot, and let $h_u(T_k)$ be the BS--CU channel in \eqref{eq:channel-BS-CU}.
Then the uplink receive SNR at the BS is:
%\begin{equation}
\(\mathrm{SNR}_u^{\mathrm{UL}}(T_k)
\;=\;
\frac{
P_u^{\mathrm{UL}}\,\big|h_u(T_k)\big|^2
}{
N_0\,B_u(T_k)
}.\)
%\label{eq:SNR-UL}
%\end{equation}
We assume half-duplex BS–CU links with equal time-sharing between uplink and downlink, then the corresponding achievable uplink rate (bits/s) is:
\begin{equation}
R_u^{\mathrm{UL}}(T_k)
\;=\;
\frac{1}{2}B_u(T_k)\;
\log_2\!\Big(
1 + \mathrm{SNR}_u^{\mathrm{UL}}(T_k)
\Big).
\label{eq:rate-UL}
\end{equation}
For the downlink (BS~$\to$~CU $u$), %the allocated downlink bandwidth in slot $T_k$ is
%\[
%%\]
let $P_u^{\mathrm{DL}}$ [W] denote the BS transmit power dedicated to CU $u$ over that band in slot $T_k$.
Using the same magnitude channel gain $|h_u(T_k)|^2$ from \eqref{eq:channel-BS-CU}, the downlink SNR at CU $u$ is:
%\begin{equation}
\(\mathrm{SNR}_u^{\mathrm{DL}}(T_k)
\;=\;
\frac{
P_u^{\mathrm{DL}}\,\big|h_u(T_k)\big|^2
}{
N_0\,B_u(T_k)
},\)
%\label{eq:SNR-DL}
%\end{equation}
and the corresponding downlink rate (bits/s) is:
\begin{equation}
R_u^{\mathrm{DL}}(T_k)
\;=\;
\frac{1}{2}B_u(T_k)\;
\log_2\!\Big(
1 + \mathrm{SNR}_u^{\mathrm{DL}}(T_k)
\Big).
\label{eq:rate-DL}
\end{equation}
%We assume half-duplex operation at the BS–CU links and uniform time-sharing between uplink and downlink. The factor \(1/2\) in \eqref{eq:rate-UL} and \eqref{eq:rate-DL} accounts for the equal time split between uplink and downlink.
%Note that the factor \(1/2\) in \eqref{eq:rate-UL} and \eqref{eq:rate-DL} arises from the assumption of equal time sharing.% between the uplink and downlink.
\vspace{-26pt}

\section{Problem Formulation}\label{sec:problem}
In Section~\ref{sec:system-model}, we introduced the system
model, consisting of a MACS and a wireless communication
system supporting CA--CA and BS--CU links.
We now formulate the joint consensus-topology and bandwidth-allocation optimization problem for the considered system. The objective is to minimize the total bandwidth consumption $B^{\mathrm{total}}(T_k)$ per slot.
To this end, we optimize the D2D topology
$\{a_{ij}(T_k)\}_{i<j}$ and the bandwidth allocation
$\mathbf{b}(T_k)$ across CA--CA and BS--CU links,
subject to: (i) windowed algebraic connectivity among CAs; (ii) minimum rate requirement for each active CA–CA link and minimum uplink and downlink rate requirements for CUs; and (iii) a per-slot total bandwidth budget. The problem is formally stated as follows:
\begin{align}
\min_{\substack{
    a_{ij}(T_k)_{i<j},\\
    \mathbf b(T_k)
}}
\quad
& {\displaystyle  B^{\mathrm{total}}(T_k)}
\label{eq:optimization} \\[4pt]
\text{s.t.} \qquad 
&\text{C}_{1}: \quad \lambda_2\!\Bigg(\sum_{\ell=\max\{k_0,k-\tilde K+1\}}^{k} \mathbf L(T_\ell)\Bigg)
\ge
\underline{\lambda}_{\tilde K},  \nonumber \\
&\text{C}_{2}: \quad
    a_{ij}(T_k) \in \{0,1\},
    \quad \forall\, i<j,\; k, \nonumber\\[4pt]
&\text{C}_{3}: \quad
    R_{ij}(T_k) \ge R_{\min}^{\mathrm{CA}} \, a_{ij}(T_k),
    \forall\, i<j,\; k, \nonumber\\[4pt]
&\text{C}_{4}: \quad \sum_{\substack{i,j\in\mathcal{N}_c\\ i<j}} b_{ij}(T_k)
\;+\;
\sum_{u\in\mathcal{N}_m} b_{u}(T_k)
\;\le\; 1, \nonumber\\
& \qquad \qquad \forall k, \nonumber\\%[4pt]
&\text{C}_{5}: \quad
    b_{ij}(T_k) \in [0,1],
    \quad \forall\, i<j,\; k, \nonumber\\[4pt]
&\text{C}_{6}: \quad
    b_{u}(T_k) \in [0,1], \quad \forall\, u, k, \nonumber\\[4pt]
&\text{C}_{7}: \quad
    %\sum_{k=k_0}^{k_0+K-1} 
    R_u^{\mathrm{DL}}(T_k) \ge R_u^{\mathrm{DL,min}},\nonumber  \quad \forall\, u,\; k, \nonumber\\[4pt]
&\text{C}_{8}: \quad
   %\sum_{k=k_0}^{k_0+K-1}
   R_u^{\mathrm{UL}}(T_k) \ge R_u^{\mathrm{UL,min}},\nonumber \quad \forall\, u,\; k. \nonumber
\end{align}
In problem~\eqref{eq:optimization}, the per-slot
bandwidth-allocation vector is
\[
\mathbf{b}(T_k)
\triangleq
\big(
\{b_{ij}(T_k)\}_{i<j},\;
\{b_{u}(T_k)\}_{u}
\big),
\]
and the total bandwidth consumed in slot $T_k$ is
\begin{align}
B^{\mathrm{total}}(T_k)
&=
\sum_{\substack{i,j \in \mathcal{N}_c\\ i<j}}
\omega\, b_{ij}(T_k)
+
\sum_{u \in \mathcal{N}_m}
\omega\, b_{u}(T_k),
\label{eq:Btotal}
\end{align}
The constraints are interpreted as follows.
C$_1$ enforces the windowed algebraic-connectivity condition
over every $\tilde{K}$ consecutive slots, guaranteeing
consensus in the MACS.
C$_2$ enforces the binary nature of the topology decisions.
C$_3$ guarantees CA--CA information exchange at or above
$R_{\min}^{\mathrm{CA}}$, which by \textbf{Lemma~\ref{lem:Rmin-CA}}
ensures the transmission delay on every active link remains
within the admissible margin $\tau^{*}(T_k)$.
C$_4$ is the per-slot bandwidth budget.
C$_5$ and C$_6$ restrict the bandwidth allocations to valid
normalized fractions in $[0,1]$.
C$_7$ and C$_8$ enforce downlink and uplink QoS for each
CU $u\in\mathcal{N}_m$ via minimum rate requirements
$R_u^{\mathrm{DL,min}}$ and $R_u^{\mathrm{UL,min}}$,
respectively.
The decision variables are the adjacency entries
$a_{ij}(T_k)$, indicating whether agents $i$ and $j$
exchange state information in slot $T_k$, and the
bandwidth allocation vector $\mathbf{b}(T_k)$.
The optimizer selects $a_{ij}(T_k)$ and $\mathbf{b}(T_k)$
to minimize the bandwidth consumption subject to the MACS
consensus and connectivity constraints and the QoS of CA and cellular
users guarantees. Note that, C$_1$ is a sliding-window constraint enforcing the windowed
algebraic connectivity required for consensus over every
$\tilde{K}$ consecutive slots.
As discussed in \textbf{Remark 2}, a smaller
$\tilde{K}$ implies stronger per-slot connectivity and
faster consensus convergence.
For example, $\tilde{K}=1$ requires the D2D graph
$\mathcal{G}(T_k)$ to be connected at every individual
slot $T_k$, allowing state information to propagate across
all agents within a single slot.

%\begin{remark}[Delay-free consensus dynamics]
%\label{rem:delay-free}
%The minimum rate constraint in $C_3$ enforces $R_{i\to j}(T_k)\geq R_{\min}^{\mathrm{CA}}$ on every active link, bounding the one-way packet transmission delay by
%
%\begin{equation*}
%\tau_{ij}(T_k) \;\leq\; \tau_{\max}\;=\; \frac{L_{\mathrm{pkt}}}{R_{\min}^{\mathrm{CA}}},
%\label{eq:tau-max}
%\end{equation*}
%
%where $L_{\mathrm{pkt}}$ [bits] is the control-packet size. Following \cite{olfati2004consensus}, consensus stability is preserved provided $\tau_{ij}(T_k) < \tau^{*}$, where $\tau^{*} = \pi/\bigl(2\lambda_{\max}(\mathbf{L}(T_k))\bigr)$ is the admissible delay margin determined by the largest eigenvalue of the graph Laplacian. Since constraint $C_1$ guarantees $\lambda_2(\mathbf{L}(T_k)) \geq \underline{\lambda}_{\tilde{K}} > 0$, the Laplacian eigenvalues are bounded and $\tau^{*}$ remains strictly positive throughout the operation. By selecting $R_{\min}^{\mathrm{CA}}$ to ensure $\tau_{\max} < \tau^{*}$, the delay-free dynamic model in~\eqref{eq:local-dyn} provides an accurate representation of the delayed dynamics, and communication delay is implicitly accounted for through the rate constraint~\eqref{rate_con}. Therefore,  although D2D communication introduces a finite transmission delay, the consensus dynamics in~\eqref{eq:stacked-dyn} adopt a delay-free model.
%\end{remark}
%In general, mixed-integer nonconvex problems are NP-hard and may require exploring prohibitively large search trees. To handle this difficulty, we propose a solution approach in the next section with polynomial complexity.

\section{Proposed Solution Approach}\label{sec:solution}
The optimization problem~\eqref{eq:optimization} is a mixed-integer nonlinear program (MINP). In general, MINPs are NP-hard, and solving them may require exploring prohibitively large search trees. However, a structural property of the optimal solution to~\eqref{eq:optimization} provides important theoretical insight into the interaction between the communication and control subsystems while enabling a computationally efficient solution approach. In this section, we present the details of the proposed solution approach

%%%%%%%%%%%%%%%%%%%%%%%%
%%%%%%%%%%%%%%
%The optimization problem~\eqref{eq:optimization} is a mixed-integer nonlinear program (MINP). In general, MINPs are NP-hard, and solving them may require exploring prohibitively large search trees. However, a structural property of the optimal solution to~\eqref{eq:optimization} enables a computationally more efficient solution approach. Specifically, \textbf{Lemma~\ref{lem:adjacency_minimal}} below establishes that any optimal solution must use the \emph{minimum} number of active D2D edges satisfying the windowed connectivity constraint~C$_1$. This structural property enables a two-stage solution in which topology selection and bandwidth allocation are solved sequentially. Both stages share the same per-slot channel state information: the topology selection stage uses channel quality as edge weights to prefer links that require less bandwidth, and the bandwidth stage allocates the minimum resources for the selected links based on their SNRs. Both stages are solvable in polynomial time, yielding an overall approach with polynomial per-slot complexity, as detailed in the next section.

\subsection{Structural Properties and Solution Approach}

Although the optimization problem in~\eqref{eq:optimization} is a MINP, it has structural property formalized in \textbf{Lemma~\ref{lem:adjacency_minimal}} below. This structural property enables us to design solution methods with substantially lower complexity. Specifically, \textbf{Lemma~\ref{lem:adjacency_minimal}} establishes that any optimal solution of~\eqref{eq:optimization} must use the \emph{minimum} number of active D2D edges satisfying the windowed connectivity constraint~C$_1$. This structural property reveals that, although the communication topology and wireless resource allocation are jointly optimized, the coupled co-design problem admits an optimal two-stage sequential algorithm. Consequently, the topology selection and bandwidth allocation problems can be solved in a two-stage sequential algorithm without loss of optimality. Both stages share the same per-slot channel state information: the topology selection stage uses channel quality as edge weights to prefer links that require less bandwidth, whereas the bandwidth allocation stage assigns the minimum communication resources required for both CUs and CAs (the selected links) according to their SNRs. 

 %Specifically, we first define a set of edge-minimal graphs
%\begin{equation}
%\mathcal{S}_{\min}\ \triangleq\ \underset{\mathcal{A}:\ \text{C}_1\ \text{holds}}{\arg\min}\ \sum_{k=k_0}^{k_0+K-1}\ \sum_{i<j} a_{ij}(T_k), 
%\end{equation}
%each satisfying C\(_1\). We then solve the problem over \(\mathcal{S}_{\min}\) via an alternating scheme: (i) select \(\mathcal{A}\in\mathcal{S}_{\min}\), (ii) solve the resulting convex bandwidth-allocation subproblem with \(\mathcal{A}\) fixed, (iii) optionally update \(\mathcal{A}\) within \(\mathcal{S}_{\min}\) using the current allocation and repeat. This reduces the original MINLP to a finite collection of convex programs, yielding near-optimal solutions with far lower computational burden. In what follows, to justify the approach, we present a lemma showing that any optimal adjacency \(\mathcal{A}\) must satisfy \(\mathcal{A}\in\mathcal{S}_{\min}\).
\begin{lemma}\label{lem:adjacency_minimal}
Let
\begin{equation}
\mathcal{S}_{\min}
\;\triangleq\;
\underset{\{a_{ij}(T_k)\}_{i<j}:\;\mathrm{C}_1\;\text{holds}}{\arg\min}
\;\sum_{i<j} a_{ij}(T_k)
\end{equation}
be the set of adjacency configurations for slot $T_k$ that satisfy the windowed algebraic-connectivity
constraint~C$_1$ and minimize the number of active edges
in the current slot.
If $H=\!\left(\{a_{ij}(T_k)\}_{i<j},\,\mathbf{b}(T_k)\right)$
is an optimal solution of~\eqref{eq:optimization},
then $\{a_{ij}(T_k)\}_{i<j}\in\mathcal{S}_{\min}$.
\end{lemma}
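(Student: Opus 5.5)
The plan is an exchange (pruning) argument by contradiction. Suppose $H=(\{a_{ij}(T_k)\}_{i<j},\mathbf b(T_k))$ is optimal but its edge set $\mathcal E(T_k)$ does not belong to $\mathcal S_{\min}$. I will build another feasible point $H'$ with strictly smaller $B^{\mathrm{total}}(T_k)$, which contradicts optimality.

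\textbf{Step 1: per-edge bandwidth cost.} First I pin down the bandwidth each active link needs. By \eqref{eq:rate-i-to-j}, the rate $R_{ij}(T_k)=\tfrac12 B\log_2(1+P_{CA}|h_{ij}|^2/(N_0B))$ is continuous and strictly increasing in $B>0$, and it tends to $0$ as $B\to0$. Since $R_{\min}^{\mathrm{CA}}>0$ by Lemma~\ref{lem:Rmin-CA}, constraint C$_3$ with $a_{ij}(T_k)=1$ forces $b_{ij}(T_k)\ge \beta_{ij}(T_k)>0$. Here $\beta_{ij}(T_k)$ is the unique bandwidth share at which $R_{ij}=R_{\min}^{\mathrm{CA}}$. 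If $a_{ij}(T_k)=0$, C$_3$ is vacuous and $b_{ij}(T_k)=0$ is allowed. So at any optimum, inactive pairs get $b_{ij}=0$; otherwise zeroing them would reduce the objective while keeping C$_3$--C$_8$. Each active edge therefore contributes at least $\omega\beta_{ij}(T_k)>0$ to \eqref{eq:Btotal}.

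\textbf{Step 2: combinatorial structure of C$_1$.} With the past slots $T_\ell$, $\ell<k$, in the window held fixed, I read C$_1$ through the union-graph characterization stated with the definition of the union graph. Feasibility means the union of the fixed past edge set $\mathcal E_{\mathrm{past}}$ and $\mathcal E(T_k)$ is connected. Let $c$ be the number of connected components of $(\mathcal V,\mathcal E_{\mathrm{past}})$. Then $\mathcal E(T_k)$ is feasible iff it connects these $c$ components, that is, iff it contains a spanning tree of the multigraph obtained by contracting each component. By the matroid/basis property of graphic matroids, every inclusion-minimal feasible edge set has exactly $c-1$ edges. Hence $\min\sum_{i<j}a_{ij}(T_k)=c-1$, and $\mathcal S_{\min}$ is exactly the set of these minimal connecting forests. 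The key consequence is that any feasible $\mathcal E(T_k)\notin\mathcal S_{\min}$ has more than $c-1$ edges. It therefore strictly contains a feasible subset $\mathcal E'\in\mathcal S_{\min}$, obtained by greedily deleting edges whose removal keeps the contracted graph connected (for example, deleting cycle edges).

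\textbf{Step 3: contradiction.} Define $H'$ by setting $a_{ij}=0$ and $b_{ij}=0$ on $\mathcal E(T_k)\setminus\mathcal E'$, and keeping everything else unchanged. The constraints still hold: C$_1$ holds by Step 2; C$_3$ holds on retained edges and is vacuous on removed ones; C$_4$ only becomes looser; C$_7$ and C$_8$ are untouched because the CU shares are unchanged. The objective drops by at least $\omega\sum_{(i,j)\in\mathcal E(T_k)\setminus\mathcal E'}\beta_{ij}(T_k)>0$, contradicting optimality of $H$.

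\textbf{Main obstacle.} The delicate point is Step 2. C$_1$ is written as $\lambda_2(\cdot)\ge\underline\lambda_{\tilde K}$ rather than $\lambda_2>0$. The equivalence with connectivity is the one the paper asserts for (A3), and I will adopt it: $\underline\lambda_{\tilde K}$ is taken as any positive level attained by connected unions. If the quantitative threshold were binding, a spanning forest might not meet it, and removing edges could violate C$_1$. In that case I would restrict the argument to the connectivity interpretation and flag the assumption explicitly. The second ingredient to verify carefully is that all inclusion-minimal feasible sets have equal cardinality; this is what turns ``not edge-minimal'' into ``strictly contains a feasible subset'', and I will justify it via the basis-exchange property of the contracted graphic matroid.
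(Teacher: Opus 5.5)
Your proof follows the paper's exchange argument. You assume an optimal $H^\star$ whose adjacency lies outside $\mathcal{S}_{\min}$ and pass to a dominated adjacency $\widetilde a\le a^\star$ in $\mathcal{S}_{\min}$. You zero the bandwidth of the dropped links, keep the CU shares, re-check C$_1$--C$_8$, and use C$_3$ with $R_{\min}^{\mathrm{CA}}>0$ to get a strict decrease in $B^{\mathrm{total}}(T_k)$. The difference is in the one nontrivial step. The paper simply asserts that such a dominated $\widetilde a\in\mathcal{S}_{\min}$ exists; you prove it. You read C$_1$ as connectivity of the window union, and observe that the feasible current-slot edge sets are exactly the spanning sets of the graphic matroid of the graph obtained by contracting the components of the fixed past union. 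Hence every inclusion-minimal feasible set is a basis with $c-1$ edges, and every larger feasible set strictly contains one. That is the justification the paper's proof lacks. Your Step~1 also makes the ``positive bandwidth on every active link'' claim rigorous.

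Your flagged caveat is not a technicality: with a fixed threshold the lemma, and the paper's asserted step, are false. Take $N_c=4$, $\tilde K=1$, and $\underline{\lambda}_{\tilde K}=0.8$.
\begin{itemize}
\item The trees on four vertices are paths ($\lambda_2=2-\sqrt{2}\approx 0.59$) and stars ($\lambda_2=1$), so $\mathcal{S}_{\min}$ is the set of four 3-edge stars.
\item The cycle $1\!-\!2\!-\!3\!-\!4\!-\!1$ has $\lambda_2=2$, and deleting any edge leaves a path. So it is an inclusion-minimal feasible set with four edges.
\item Every star contains a diagonal $\{1,3\}$ or $\{2,4\}$, so no star lies inside the cycle.
\item Suppose the diagonals need bandwidth share $M$ to reach $R_{\min}^{\mathrm{CA}}$ and the cycle links need $\epsilon$, with $M>2\epsilon$ and C$_4$ slack. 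Then any feasible set containing a diagonal costs at least $M+2\epsilon>4\epsilon$, so the unique optimum activates the 4-cycle, which is outside $\mathcal{S}_{\min}$.
\end{itemize}

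In general the exchange argument yields only inclusion-minimality. Equal cardinality of all minimal feasible sets is exactly the matroid property that breaks once the threshold binds. State the connectivity reading as an explicit hypothesis:
\[
0<\underline{\lambda}_{\tilde K}\le\min\lambda_2(\cdot)\ \text{over all connected window unions reachable from the fixed past}.
\]
This family is finite, so the minimum is positive. Your phrase ``any positive level attained by connected unions'' should be sharpened to this. The paper's own choice $\underline{\lambda}_{\tilde K}=0.15$ with $N_c=20$ violates it, since a 20-node path has $\lambda_2\approx 0.025$.
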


\begin{proof}
Let us prove it by contradiction.
Note that the Laplacians $\mathbf{L}(T_\ell)$ for
$\ell < k$ are fixed by past decisions; only
$\{a_{ij}(T_k)\}_{i<j}$ is the current decision variable.
Assume that optimal solution
$H^\star=\!\left(\{a^\star_{ij}(T_k)\}_{i<j},\,\mathbf{b}^\star(T_k)\right)$
has $\{a^\star_{ij}(T_k)\}_{i<j}\notin\mathcal{S}_{\min}$.
Then there exists $\{\widetilde{a}_{ij}(T_k)\}_{i<j}
\in\mathcal{S}_{\min}$ such that
$\widetilde{a}_{ij}(T_k)\leq a^\star_{ij}(T_k)$
for all $i<j$, with strictly fewer active edges,
and C$_1$ still holds.
Construct $\widetilde{H}=\!\left(\{\widetilde{a}_{ij}(T_k)\}_{i<j},
\,\widetilde{\mathbf{b}}(T_k)\right)$ by setting
$\widetilde{b}_{ij}(T_k)=b^\star_{ij}(T_k)$ if
$\widetilde{a}_{ij}(T_k)=1$, and
$\widetilde{b}_{ij}(T_k)=0$ otherwise;
keep $\widetilde{b}_u(T_k)=b^\star_u(T_k)$ for all
$u\in\mathcal{N}_m$.
Solution $\widetilde{H}$ satisfies:
C$_1$ by definition of $\mathcal{S}_{\min}$;
C$_2$ trivially;
C$_3$ since for $\widetilde{a}_{ij}=0$ the right-hand
side is zero, and for $\widetilde{a}_{ij}=1$ the
original feasible allocation is retained;
C$_4$--C$_6$ since no allocation is increased;
C$_7$--C$_8$ since cellular allocations are unchanged.
Therefore, $B^{\mathrm{total}}(T_k)$ decreases
strictly: at least one edge active in
$\{a^\star_{ij}(T_k)\}_{i<j}$ is inactive in
$\{\widetilde{a}_{ij}(T_k)\}_{i<j}$, and by C$_3$
that edge carries positive bandwidth in slot $T_k$,
which is set to zero in $\widetilde{H}$.
This contradicts the optimality of $H^\star$.
\end{proof}
Leveraging \textbf{Lemma~\ref{lem:adjacency_minimal}}, no optimal solution employs an adjacency
$\{a_{ij}(T_k)\}_{i<j}\notin\mathcal{S}_{\min}$.
Therefore, \textbf{without loss of optimality}, the feasible search space can be reduced to
$\mathcal{S}_{\min}$. Accordingly, the original optimization problem can be solved in two sequential stages:
(i) select an adjacency
$\{a_{ij}(T_k)\}_{i<j}\in\mathcal{S}_{\min}$;
(ii) solve the resulting convex bandwidth-allocation problem for the selected adjacency.
The two subproblems are:
\begin{align}
\min_{\{a_{ij}(T_k)\}_{i<j}}\quad
&\sum_{i<j} a_{ij}(T_k)
\nonumber\\[3pt]
\text{s.t.}\quad
&\text{C}_{1}:\lambda_2\!\Bigg(
  \sum_{\ell=\max\{k_0,\,k-\tilde{K}+1\}}^{k}
  \mathbf{L}(T_\ell)
\Bigg)
\;\ge\;
\underline{\lambda}_{\tilde{K}},\nonumber\\
&\text{C}_{2}:\quad
  a_{ij}(T_k)\in\{0,1\},\quad\forall\,i<j,
\nonumber\\[3pt]
\label{eq:sub_1}
\end{align}
and
\begin{align}
\min_{\mathbf{b}(T_k)}\quad
&B^{\mathrm{total}}(T_k)
\label{eq:sub_2}\\[4pt]
\text{s.t.}\quad
&\text{C}_{3}^{\star}:\quad
  R_{ij}(T_k)\ge R_{\min}^{\mathrm{CA}},
  \quad\forall\,i<j\ \text{with}\ a_{ij}(T_k)=1,
\nonumber\\[4pt]
&\text{C}_{4}\text{--}\text{C}_{8}.\nonumber
\end{align}
Since multiple adjacency configurations may satisfy
C$_1$ with the same minimum edge count, the choice
among them affects the bandwidth cost in
subproblem~\eqref{eq:sub_2}.
To bias the selection toward topologies that are
cheaper for bandwidth allocation, we introduce
channel-aware edge weights.
Specifically, from \eqref{eq:SNR-i-to-j}--\eqref{eq:rate-j-to-i},
the bandwidth required to meet $R_{\min}^{\mathrm{CA}}$
on edge $(i,j)$ in slot $T_k$ decreases monotonically
with the channel SNR $c_{ij}(T_k)=P_{\mathrm{CA}}|h_{ij}|^2/N_0$.
We therefore set the edge weight as
\begin{equation}
w_{ij}(T_k)
\;\propto\;
\frac{1}{\log_2\!\left(1+c_{ij}(T_k)\right)},
\label{eq:weights}
\end{equation}
and replace the unweighted objective in~\eqref{eq:sub_1}
with the bandwidth-aware surrogate:
\begin{align}
\min_{\{a_{ij}(T_k)\}_{i<j}}\quad
&\sum_{i<j} w_{ij}(T_k)\,a_{ij}(T_k)
\nonumber\\[3pt]
\text{s.t.}\quad
&\text{C}_{2}:\quad
  a_{ij}(T_k)\in\{0,1\},\quad\forall\,i<j,
\nonumber\\[3pt]
&\lambda_2\!\Bigg(
  \sum_{\ell=\max\{k_0,\,k-\tilde{K}+1\}}^{k}
  \mathbf{L}(T_\ell)
  \Bigg)
\;\ge\;
\underline{\lambda}_{\tilde{K}}.
\label{eq:sub_1_W}
\end{align}
This focuses the topology selection on edges that
can meet C$_3^\star$ with less bandwidth in
subproblem~\eqref{eq:sub_2}, thereby reducing
$B^{\mathrm{total}}(T_k)$ at the second stage.
%which prioritizes activating edges that can meet C\(_3^\star\) (in subproblem \eqref{eq:sub_2}) with less bandwidth. This focuses the search on those minimal-edge graphs most likely to yield a smaller \(\sum_k B^{\mathrm{total}}(T_k)\) in subproblem \eqref{eq:sub_2}.

In what follows, we first present a solution for the minimal-edge graph subproblem \eqref{eq:sub_1_W} to select an adjacency \(\mathcal{A}\in\mathcal{S}_{\min}\) using a spectral linear matrix inequality (LMI) formulation of the windowed algebraic-connectivity requirement. We then address the bandwidth-allocation subproblem \eqref{eq:sub_2}, which, for fixed \(\{a_{ij}(T_k)\}_{i<j}\), reduces to a convex optimization problem.

\subsection{Minimal-Edge Graph Subproblem via Spectral LMI}\label{sec:solution:A}
%%%%%%%%%%%%%%%%%
%A remaining challenge is that searching over all members of $\mathcal{S}_{\min}$ is still computationally expensive: the edge selection per slot involves $\binom{N_c}{2}$ binary variables, windows overlap, and the number of feasible topologies satisfying C$_1$ with exactly the minimum edge count can grow exponentially with $N_c$.
%%%%%%%%%%%%%%%
To solve~\eqref{eq:sub_1_W}, we express C$_1$ via its linear matrix inequality (LMI) representation so that the constraint is affine in the adjacency variables $\{a_{ij}(T_k)\}_{i<j}$ through the windowed Laplacian.
We then handle the resulting mixed-integer semidefinite program (MISDP) using a relax-and-round approach: relax $a_{ij}(T_k)\in\{0,1\}$ to continuous variables $\alpha_{ij}(T_k)\in[0,1]$, solve the convex SDP, then round and prune while re-checking the LMI to obtain a feasible binary solution.

\begin{remark}[Semidefinite form of the joint connectivity
condition \cite{ghosh2006growing}]
C$_1$ is equivalent to the LMI
\begin{equation}
\sum_{\ell=\max\{k_0,\,k-\tilde{K}+1\}}^{k}
\mathbf{L}(T_\ell)
\;-\;
\underline{\lambda}_{\tilde{K}}
\Big(\mathbf{I}_{N_c}
     -\tfrac{1}{N_c}\mathbf{1}\mathbf{1}^\top\Big)
\succeq 0.
\label{eq:windowed-sdp-thm_}
\end{equation}
Representing C$_1$ as~\eqref{eq:windowed-sdp-thm_} makes
the constraint affine in
$\{a_{ij}(T_\ell)\}_{\ell=\max\{k_0,k-\tilde{K}+1\}}^{k}$,
allowing convex relaxation once the binary constraints
are relaxed.
\end{remark}

Because $\mathbf{L}(T_\ell)$ is affine in the edge variables, \eqref{eq:windowed-sdp-thm_} is convex in
continuous variables. We therefore relax $a_{ij}(T_k)\in\{0,1\}$ to $\alpha_{ij}(T_k)\in[0,1]$
and rewrite~\eqref{eq:sub_1_W} as the SDP:
\begin{align}
\min_{\{\alpha_{ij}(T_k)\}_{i<j}}\quad
&\sum_{i<j} w_{ij}(T_k)\,\alpha_{ij}(T_k)
\label{eq:sdp-relaxation}\\
\text{s.t.}\quad
&\eqref{eq:windowed-sdp-thm_},\quad
 0\leq\alpha_{ij}(T_k)\leq 1,\quad\forall\,i<j.\nonumber
\end{align}

Post-processing yields binary variables via:
\begin{enumerate}
\item \textbf{Rounding:} For each slot $T_k$,
      either (a) threshold edges where
      $\alpha_{ij}(T_k)\geq\tau$ (e.g., $\tau=0.5$),
      or (b) retain the top-$\kappa$ edges with the
      largest $\alpha_{ij}(T_k)$,
      to obtain $a_{ij}(T_k)\in\{0,1\}$.
\item \textbf{Prune and certify:} Greedily remove
      selected edges in decreasing order of $w_{ij}(T_k)$
      while~\eqref{eq:windowed-sdp-thm_} remains
      satisfied; if removal violates
      \eqref{eq:windowed-sdp-thm_}, the edge is restored.
      This produces a minimal (or near-minimal) feasible
      solution.
\end{enumerate}
\subsection{Bandwidth Allocation Subproblem}\label{sec:solution:B}
Given a feasible set of adjacency variables $\{a_{ij}(T_k)\}_{i<j}$, subproblem~\eqref{eq:sub_2} is convex with respect to $\mathbf{b}(T_k)$. Specifically, the objective function $B^{\mathrm{total}}(T_k)$ is linear in $\mathbf{b}(T_k)$. Moreover, all rate functions in \eqref{eq:rate-i-to-j}, \eqref{eq:rate-j-to-i}, \eqref{eq:rate-UL}, and \eqref{eq:rate-DL} are concave with respect to $\mathbf{b}(T_k)$, since the second derivative of the SNR-based term is strictly negative. Consequently, the QoS constraints of CAs in C$_3^\star$ and those of CUs in C$_7$ and C$_8$ are convex. In addition, constraints C$_4$--C$_6$ are affine in $\mathbf{b}(T_k)$. Thus, bandwidth allocation subproblem  \eqref{eq:sub_2} is convex, and its global optimal solution can be obtained using standard convex optimization tools (e.g., CVX/CVXPy with MOSEK).

\subsection{Overall Procedure of the Proposed Approach}
In this subsection, we summarize the overall procedure of the proposed approach in \textbf{Algorithms 1–4}.
\begin{algorithm}[t]
\footnotesize
\caption{Consensus-Aware Co-Design at Slot $T_k$}
\label{alg:main}
\DontPrintSemicolon
\KwInput{%
  $\tilde{K}$, $\underline{\lambda}_{\tilde{K}}$,
  $\kappa$, $N_c$, $N_m$;
  channels $c_{ij}(T_k)$, $c_u^{\mathrm{UL}}(T_k)$,
  $c_u^{\mathrm{DL}}(T_k)$;
  rates $R_{\min}^{\mathrm{CA}}$,
  $R_u^{\mathrm{UL,min}}$, $R_u^{\mathrm{DL,min}}$;
  past Laplacians
  $\mathbf{L}_{<k}\!\coloneqq\!
   \{\mathbf{L}(T_\ell)\}
   _{\ell=\max\{k_0,k-\tilde{K}+1\}}^{k-1}$
}
\KwOutput{%
  $\{a_{ij}(T_k)\}_{i<j}$,\quad
  $\mathbf{b}(T_k)$}

\tcp{Step 1: Channel-aware edge weights}
\ForEach{$(i,j):\,i<j$}{
  $w_{ij}(T_k)
    \gets [\log_2(1+c_{ij}(T_k))]^{-1}$\;
}

\tcp{Step 2: SDP relaxation}
$\{\alpha_{ij}(T_k)\}_{i<j}
  \gets \textsc{SDP\_Relax}\bigl(
    \{w_{ij}(T_k)\},\,
    \underline{\lambda}_{\tilde{K}},\,
    \tilde{K},\,N_c,\,
    \mathbf{L}_{<k}\bigr)$
\tcp*{Alg.~\ref{alg:sdp}}

\tcp{Step 3: Round and certify C$_1$}
$\{a_{ij}(T_k)\}_{i<j}
  \gets \textsc{RoundAndPrune}\bigl(
    \{\alpha_{ij}(T_k)\},\,
    \{w_{ij}(T_k)\},
    \kappa,\,
    \underline{\lambda}_{\tilde{K}},\,
    \tilde{K},\,
    \mathbf{L}_{<k}\bigr)$\\
\tcp*{Alg.~\ref{alg:round}}

\tcp{Step 4: Convex bandwidth allocation}
$\mathbf{b}(T_k)
  \gets \textsc{BandwidthAlloc}\bigl(
    \{a_{ij}(T_k)\},\,
    c_{ij}(T_k),\,
    c_u^{\mathrm{UL}}(T_k),\,
    c_u^{\mathrm{DL}}(T_k),
    R_{\min}^{\mathrm{CA}},\,
    R_u^{\mathrm{UL,min}},\,
    R_u^{\mathrm{DL,min}}\bigr)$\\
\tcp*{Alg.~\ref{alg:bw}}

\KwRet $\{a_{ij}(T_k)\}_{i<j},\ \mathbf{b}(T_k)$
\end{algorithm}

% In the preamble (once)
% \SetKwFunction{Lapl}{Laplacian}

\begin{algorithm}[t]
\footnotesize
\caption{SDP Relaxation for Windowed Spectral Connectivity}
\label{alg:sdp}
\DontPrintSemicolon
\setcounter{AlgoLine}{0}
\KwInput{%
\(
\left\{
\begin{aligned}
&\{w_{ij}(T_k)\}_{i<j}
\qquad \text{(channel-aware edge weights)},\\
&\underline{\lambda}_{\tilde{K}},\ \tilde{K},\ N_c,\\
&\{\mathbf{L}(T_\ell)\}_{\ell=\max\{k_0,\,k-\tilde{K}+1\}}^{k-1}
\quad\text{(past Laplacians, fixed)}
\end{aligned}
\right.
\)
}
\KwOutput{$\{\alpha_{ij}(T_k)\}_{i<j}\in[0,1]$ \quad (relaxed edge weights)}

% Decision variable for current slot only
Define symmetric variable
$\boldsymbol{\alpha}(T_k)\in[0,1]^{N_c\times N_c}$
with $\alpha_{ii}(T_k)=0\ \forall\,i$
and $\alpha_{ij}(T_k)=\alpha_{ji}(T_k)\ \forall\,i<j$\;

% Objective
$\mathrm{obj} \gets
  \displaystyle\sum_{i<j} w_{ij}(T_k)\,\alpha_{ij}(T_k)$\;

% Projection matrix onto consensus subspace
$P \gets I_{N_c}-\tfrac{1}{N_c}\mathbf{1}\mathbf{1}^\top$\;

% Build windowed Laplacian: past slots fixed, current slot variable
$L_{\mathrm{agg}} \gets
  \displaystyle
  \sum_{\ell=\max\{k_0,\,k-\tilde{K}+1\}}^{k-1}
  \mathbf{L}(T_\ell)$
\tcp*{fixed past Laplacians}

$L_{\mathrm{agg}} \gets L_{\mathrm{agg}}
  + \textsc{Laplacian}\!\bigl(\boldsymbol{\alpha}(T_k)\bigr)$
\tcp*{current slot — affine in $\boldsymbol{\alpha}(T_k)$}

% LMI constraint (single, for current window)
Impose:\quad
$L_{\mathrm{agg}} - \underline{\lambda}_{\tilde{K}}\,P
  \succeq 0$\;

% Solve SDP
Solve $\min\ \mathrm{obj}$ subject to all constraints
(e.g., via CVX/SDPT3)\;

\KwRet $\{\alpha_{ij}(T_k)\}_{i<j}$
\end{algorithm}

\begin{algorithm}[t]
\footnotesize
\caption{\textsc{RoundAndPrune}: Binary Rounding
  and Minimal-Edge Certification}
\label{alg:round}
\DontPrintSemicolon
\KwInput{%
  $\{\alpha_{ij}(T_k)\}_{i<j}\in[0,1]$
  (SDP relaxation solution);
  $\{w_{ij}(T_k)\}_{i<j}$ (edge weights);
  $\kappa$ (max edges);
  $\underline{\lambda}_{\tilde{K}}$,
  $\tilde{K}$,
  $\mathbf{L}_{<k}$ (past Laplacians)
}
\KwOutput{$\{a_{ij}(T_k)\}_{i<j}\in\{0,1\}$
  (minimal binary adjacency satisfying C$_1$)}

\tcp{--- Phase 1: Top-$\kappa$ Rounding ---}
$a_{ij}(T_k)\gets 0\quad\forall\,i<j$\;
$\mathcal{E}\gets\{(i,j,\alpha_{ij}(T_k)):i<j\}$
sorted descending by $\alpha_{ij}(T_k)$\;
$m\gets\min\{\kappa,|\mathcal{E}|\}$\;
\For{$t\gets 1$ \KwTo $m$}{
  $(i,j,\cdot)\gets\mathcal{E}[t]$;\quad
  $a_{ij}(T_k)\gets 1$;\quad
  $a_{ji}(T_k)\gets 1$\;
}

\tcp{--- Phase 2: Prune and Certify C$_1$ ---}
$\mathcal{E}_{\mathrm{act}}\gets
  \{(i,j):a_{ij}(T_k)=1,\;i<j\}$
  sorted descending by $w_{ij}(T_k)$\;
\ForEach{$(i,j)\in\mathcal{E}_{\mathrm{act}}$}{
  $a_{ij}(T_k)\gets 0$;\;
  $a_{ji}(T_k)\gets 0$
  \tcp*{tentatively remove}
  $L_{\mathrm{agg}}\gets
    \mathbf{L}_{<k}
    +\textsc{Laplacian}\!\bigl(
      \{a_{ij}(T_k)\}\bigr)$\;
  \eIf{$L_{\mathrm{agg}}
       -\underline{\lambda}_{\tilde{K}}
       (I_{N_c}
       -\tfrac{1}{N_c}\mathbf{1}\mathbf{1}^\top)
       \succeq 0$}{
    \tcp{C$_1$ holds — keep removal}
  }{
    $a_{ij}(T_k)\gets 1$;\quad
    $a_{ji}(T_k)\gets 1$
    \tcp*{restore edge}
  }
}
\KwRet $\{a_{ij}(T_k)\}_{i<j}$
\end{algorithm}

\begin{algorithm}[t]
\footnotesize
\caption{\textsc{BandwidthAllocation}:
  Convex Program for Fixed Topology}
\label{alg:bw}
\DontPrintSemicolon
\KwInput{%
\(
\left\{
\begin{array}{l}
\{a_{ij}(T_k)\}_{i<j}
,\\[1mm]
R_{\min}^{\mathrm{CA}},\quad
\{R_u^{\mathrm{UL,min}}\}_u,\quad
\{R_u^{\mathrm{DL,min}}\}_u
\end{array}
\right.
\)
}

\KwOutput{$\mathbf{b}(T_k)
  =\bigl(\{b_{ij}(T_k)\}_{i<j},\,
         \{b_u(T_k)\}_u\bigr)$}

% Variables
\textbf{Variables.} Define nonnegative bandwidths $b_{ij}[k]$ for active D2D edges and $b_u[k]$ for CUs.\;

\textbf{Objective.} Minimize total bandwidth in \eqref{eq:sub_2}.\;

\textbf{Constraints.} Enforce C\(_3^\star\), C\(_4\)–C\(_8\) as stated in the problem.\;

\textbf{Solve.} Use a convex solver (e.g., CVX).\;

\textbf{}\KwRet $\mathbf{b}(T_k)$
\end{algorithm}
\subsection{Analysis of Computational Complexity}
% =========================================================
%  Complexity Analysis — Corrected Version
%  Ready-to-paste into Overleaf
% =========================================================

The overall per-slot computational complexity is
dominated by subproblem~\eqref{eq:sub_1_W}.

\textbf{SDP relaxation (Algorithm~\ref{alg:sdp}).}
The SDP in~\eqref{eq:sdp-relaxation} involves
$n=O(N_c^2)$ decision variables
$\{\alpha_{ij}(T_k)\}_{i<j}$ and one LMI constraint
of size $N_c\times N_c$.
Using interior-point methods, the solver requires
$O(\sqrt{N_c})$ iterations, each costing $O(N_c^6)$
operations to form and factorize the $n\times n$
Schur complement matrix, yielding a per-slot SDP
complexity of $O(N_c^{6.5})$.

\textbf{Rounding and pruning (\textbf{Algorithm~\ref{alg:round})}.}
Top-$\kappa$ rounding sorts $O(N_c^2)$ edges in
$O(N_c^2\log N_c)$.
Pruning attempts at most $\kappa$ edge removals, each
requiring a Laplacian eigenvalue check costing
$O(N_c^3)$, giving $O(\kappa N_c^3)$ for the
pruning phase.

\textbf{Bandwidth allocation (\textbf{Algorithm~\ref{alg:bw})}.}
Subproblem~\eqref{eq:sub_2} is a convex LP with
$O(N_c^2+N_m)$ variables and $O(N_c^2+N_m)$
constraints.
For an LP with $n$ variables, interior-point methods
require $O(\sqrt{n})$ iterations each costing $O(n^2)$,
giving $O(n^{2.5})$ total.
The per-slot LP complexity is therefore
$O\!\bigl((N_c^2+N_m)^{2.5}\bigr)$.

\textbf{Total per-slot complexity.}
The dominant term is the SDP relaxation, giving
\begin{equation}
O\!\Bigl(
  N_c^{6.5}
  + \kappa\, N_c^3
  + (N_c^2+N_m)^{2.5}
\Bigr),
\label{eq:complexity}
\end{equation}
which is polynomial in $N_c$ and $N_m$.

%The proposed method achieves polynomial complexity \eqref{eq:complexity} by decomposing the NP-hard mixed-integer problem into three tractable stages. First, Lemma~\ref{lem:adjacency_minimal} restricts the search to minimum-edge topologies satisfying C$_1$, eliminating all suboptimal adjacency configurations without loss of optimality. Second, the binary constraint is relaxed to a convex SDP, solvable in polynomial time via interior-point methods; rounding and pruning then recover a feasible binary topology in $O(\kappa N_c^3)$. Third, given the fixed topology, the bandwidth allocation in Algorithm~\ref{alg:bw} is solved to \emph{global optimality} via a convex LP. The only source of suboptimality is the SDP relaxation gap in topology selection. The SDP provides a computable lower bound on the optimal weighted edge cost; the rounded solution provides an upper bound; and their ratio quantifies the optimality gap. By Lemma~\ref{lem:adjacency_minimal}, the pruning step guarantees the final topology is minimal in edge count, so the suboptimality affects only \emph{which} minimum-edge topology is selected, not \emph{how many} edges it contains.
\section{Performance Evaluation}\label{sec:simulation}

\subsection{Simulated Multi-agent Control and Communication Model}\label{sec:simulation_system}
We evaluate the proposed co-design approach using a system model comprising 20 CAs and 40 CUs. Specifically, the 20 CAs correspond to the control areas of a 20-area load--frequency control (LFC) system, in which distributed energy resources (DERs) introduce stochastic load disturbances that require distributed coordination among the control areas. The 20 CAs exchange their local area control error (ACE) integral states through D2D wireless links to achieve consensus. Consensus of the ACE integral states enables coordinated secondary frequency regulation and maintains the tie-line power balance among all control areas. The D2D links share the licensed spectrum with $N_m$ mobile CUs, each requiring a minimum uplink and downlink rate. System performance is assessed in terms of:
(i)~ACE consensus disagreement
     $\|\tilde{\mathbf{X}}(t)\|$,
(ii)~frequency deviations $\Delta f_i(t)$,
(iii)~windowed algebraic connectivity
      $\lambda_2\!\left(\sum_\ell \mathbf{L}(T_\ell)\right)$, and
(iv)~normalized total bandwidth consumption
     $B^{\mathrm{total}}(T_k)/\omega$.
%We evaluate the proposed co-design approach for a distributed power system cooperating through wireless communication links. In this system, a 20-area load–frequency control system with distributed energy resources (DERs) shares the wireless spectrum with CUs. Each control area is represented by one CA. The agents regulate local frequency deviations in the distributed power system and exchange coordination information via wireless D2D  links. Specifically, the multi-agent control objective is to achieve secondary frequency control and tie-line power balance while sharing bandwidth with mobile CUs that require a minimum QoS. In this system, performances are assessed in terms of frequency deviations, area control error (ACE), consensus disagreement, and bandwidth consumption. %connectivity ($\lambda_2$),
%\subsection{Distributed Power System}
%\label{sec:simulation_system}

Each control area is represented by one CA, forming a 20-node MACS. Each CA exchanges frequency deviation (\(\Delta f_i(t)\)) and ACE ($\xi_i(t)$) with neighboring agents. The dynamics of each agent $i$ is \cite{singh2017distributed}:
\begin{align}
\Delta\dot f_i(t) &= -\frac{D_i}{M_i}\,\Delta f_i(t)
+ \frac{1}{M_i}\,\Delta P_{t,i}(t) \nonumber \\
&- \frac{1}{M_i}\,\Delta P^{i}_{tie}(t)
- \frac{1}{M_i}\,\Delta P_{L_i}(t),\label{eq:area-dynamics} \\
\Delta\dot P_{t_i}(t) &= -\frac{1}{T_{t_i}}\,\Delta P_{t_i}(t)
+ \frac{1}{T_{t_i}}\,\Delta P_{g_i}(t),\nonumber\\
\Delta\dot P_{g_i}(t) &= - \frac{1}{R_i T_{g_i}}\,\Delta f_i(t)
-\frac{1}{T_{g_i}}\,\Delta P_{g_i}(t)
+ \frac{1}{T_{g_i}}\,\Delta P_{c_i}(t),\nonumber\\
\Delta\dot P^{i}_{tie}(t) &= \sum_{\substack{j=1\\ j\ne i}}^{N_c} 
2\pi\,T_{ij}\big(\Delta f_i (t)- \Delta f_j(t)\big),\nonumber\\
\dot{\xi}_i(t) &=
-\gamma_i\,\Delta f_i(t)
-\sum_{j\in\mathcal{N}_c}
a_{ij}(T_k)\,
\big(\xi_i(t)-\xi_j(t)\big),\nonumber   
\end{align}
\noindent where $\Delta f_i$ is the frequency deviation, $\Delta P_{g_i}$ and $\Delta P_{t_i}$ are the governor and turbine power deviations, respectively, $\Delta P_{L_i}$ is the load disturbance, $\Delta P_{c_i}$ is the secondary control input, and $\Delta P^{i}_{tie}$ represents the tie-line power deviation of agent $i$. The variable ${\xi_i}$ is the integral state associated with the ACE, which accumulates frequency deviations and neighboring discrepancies to achieve secondary frequency restoration and consensus among the CAs. %$\mathbf{L}_T$ captures the physical coupling through tie-lines (appearing in $\Delta {\dot{P}}^{\,i}_{\mathrm{tie}}$), while $\mathbf{L}(T_k)$ captures the information coupling over the slot-$T_k$ D2D graph (appearing in $\dot \xi_i$). The local term $-\gamma_i\Delta f_i$ is embedded in $\mathbf{A}_i^d$ .
All other symbols are defined as: $M_i$ (inertia), $D_i$ (damping), $R_i$ (droop),
$T_{g_i}$ (governor time constant), $T_{t_i}$ (turbine time constant), $T_{ij}$ (synchronizing
coefficient of tie-line $i\!-\!j$), $\Delta P_{L_i}$ (load disturbance).
Let each CA's state be defined as  $\mathbf {x}_i(t)=\begin{bmatrix}\Delta f_i(t)& \Delta P_{t,i}(t) & \Delta P_{g,i}(t) & \Delta P^{i}_{\mathrm{tie}}(t) & \xi_i(t)\end{bmatrix}^{\!\top}\!\in\mathbb{R}^{5}$,
the control input as $\mathbf{u}_i(t)\triangleq \Delta P_{c,i}(t)$ and the exogenous disturbance as $E_i(t)=- \frac{1}{M_i}\,\Delta P_{L_i}(t)$, the dynamics in \eqref{eq:area-dynamics} can be written in state–space form as follows:
\begin{align}
\dot{\mathbf{x}}_i(t)
&= \mathbf{A}_i^d\,\mathbf{x}_i(t) \;+\; \mathbf{B}_i\,\mathbf{u}_i(t) \;+\; \mathbf{E}_i(t) \nonumber\\
&- \sum_{j=1}^{N_c}\! \Big( \mathbf{L}_T(i,j)\,\mathbf{C}_T\,\mathbf{x}_j(t) 
\;+\; \mathbf{L_c}(T_k)(i,j)\mathbf{C_c}\,\mathbf{x}_j(t) \Big),
\label{eq:ss-single}
\end{align}
where
\[
\mathbf{A}_i^d =
\begin{bmatrix}
-\frac{D_i}{M_i} & + \frac{1}{M_i} & 0 & - \frac{1}{M_i} & 0\\
0 & -\dfrac{1}{T_{t_i}} & \dfrac{1}{T_{t_i}} & 0 & 0\\
-\dfrac{1}{R_iT_{g_i}} & 0 & -\dfrac{1}{T_{g_i}} & 0 & 0\\
\displaystyle \sum_{j=1,\,j\ne i}^{N_c} 2\pi T_{ij} & 0 & 0 & 0 & 0\\
-\gamma_i & 0 & 0 & 0 & 0
\end{bmatrix},
\]

\[
\mathbf{B}_i = 
\begin{bmatrix}
0 & 0 & \dfrac{1}{T_{g_i}} & 0 & 0
\end{bmatrix}^{\!\top},
\]
\[
\mathbf{E}_i =
\begin{bmatrix}
-\dfrac{1}{M_i}\Delta P_{L_i}(t) & 0 & 0 & 0 & 0
\end{bmatrix}^{\!\top}.
\]
\noindent This MACS involves two types of Laplacian matrices: one capturing physical coupling ($\mathbf{L}_T$) and another capturing information coupling ($\mathbf{L}_c(T_k)$). The constant matrix $\mathbf{L}_T\in\mathbb{R}^{N_c\times N_c}$ is the tie-line Laplacian associated with the underlying power network. It is constructed using symmetric matrix $[W_T]_{ij}=2\pi T_{ij}$, i.e.,
$\mathbf{L}_T=\mathrm{diag}(W_T\mathbf{1})-W_T$. Likewise,  $\mathbf{L}_c(T_k)\triangleq \mathrm{diag}(\mathbf{A}(T_k)\mathbf{1})-\mathbf{A}(T_k)$ is the communication Laplacian induced by the D2D adjacency matrix $\mathbf{A}(T_k)=[a_{ij}(T_k)]$ in slot $T_k$. Also, $\mathbf{C}_T$ and $\mathbf{C}_c$ are coupling (consensus) gain matrices, where $\mathbf{C}_T$ captures physical tie-line coupling and $\mathbf{C}_c$ captures information coupling. 
\begin{remark}
The agent dynamics in \eqref{eq:ss-single} constitute a structured instance of the general
consensus-coupled multi-agent model in \eqref{eq:stacked-dyn}.
In particular, the interaction terms are decomposed into two components:
(i) a physical coupling induced by power tie-lines (appearing in $\Delta {\dot{P}}^{\,i}_{\mathrm{tie}}$), captured by the constant Laplacian
$\mathbf{L}_T$ with coupling matrix $\mathbf{C}_T$, and
(ii) an information coupling induced by D2D wireless communication (appearing in
$\dot \xi_i$), captured by the
time-varying Laplacian $\mathbf{L}_c(T_k)$ with coupling gain $\mathbf{C}_c$. %Since the physical tie-line topology and parameters are fixed, the Laplacian $\mathbf{L}_T$ is time-invariant. It indirectly influences the communication-topology optimization by shaping the coordination requirements among agents. In contrast, the communication Laplacian $\mathbf{L}_c(T_k)$ is explicitly optimized to satisfy windowed spectral-connectivity constraints while minimizing communication resource usage. 
Both couplings take the standard diffusive form
%$\sum_j \mathbf L(T_k)\mathbf{C}(\mathbf{x}_j-\mathbf{x}_i)$
$-\big(\mathbf{L}(T_k)\otimes \mathbf{C}\big)\mathbf{X}$
 and therefore preserve the consensus structure assumed in \eqref{eq:stacked-dyn}.
\end{remark}
 %Channels and allocations are piecewise constant within each slot; $\mathbf{A}(T_k)$, $\mathbf{L}_c(T_k)$, and the wireless gains are held fixed over \([kT_s,\,(k+1)T_s]\). 
By stacking states $\mathbf{X}\!\triangleq\!\col(\mathbf{x}_1,\dots,\mathbf{x}_{N_c})\in\mathbb{R}^{5\times N_c}$, inputs
$\mathbf{U}\!\triangleq\!\col(\mathbf{u}_1,\dots,\mathbf{u}_{N_c})$, and disturbances
$\mathbf{E}\!\triangleq\!\col(\mathbf{E}_1,\dots,\mathbf{E}_{N_c})$, and by defining
$\mathbf{A}^d \triangleq \blkdiag(\mathbf{A}_1^d,\dots,\mathbf{A}_{N_c}^d)$ ,\qquad
$\mathbf{B} \;\triangleq\; \blkdiag(\mathbf{B}_1,\dots,\mathbf{B}_{N_c})$, overall system dynamics evolves as:
\begin{equation}
\begin{aligned}
\dot{\mathbf{X}}(t)
&=
\Big(\mathbf{A}^d 
\;-\; \mathbf{L}_T\otimes \mathbf{C}_T
\;-\; \mathbf{L}_c(T_k)\otimes \mathbf{C}_c\Big)\mathbf{X}(t) \\[4pt]
&\quad +\;\mathbf{B}\,\mathbf{U}(t)\;+\;\mathbf{E}(t),
\qquad t\in T_k,
\label{eq:ss-stacked}
\end{aligned}
\end{equation}
which is in the standard form \eqref{eq:stacked-dyn}.% used in consensus-coupled multi-agent control.
%We simulate the four–area LFC in MATLAB, the time horizon is divided into slots $T_k$ of duration $0.5~\mathrm{s}$, and inside each slot, the continuous dynamics is integrated with step $10~\mathrm{ms}$. 
The remaining power system parameters are listed in Table~\ref{tab:numvalues}.
%Small step load changes are applied to the area loads $\Delta P_{L_i}$.
\begin{table}[h]
\centering
\caption{Simulation parameters}
\label{tab:numvalues}
\scriptsize
\renewcommand{\arraystretch}{1.3}
\begin{tabular}{@{}l l c@{}}
\toprule
\textbf{Parameter} & \textbf{Description} & \textbf{Value} \\
\midrule
\multicolumn{3}{@{}c}{\textbf{Power system parameters (per area $i$)}}\\\hline
$M_i$ (p.u.$\cdot$s) & Inertia constant & 12 \\
$D_i$ (p.u./Hz) & Damping coefficient & $\{1.5, 1.8, 1.5, 1.8\}$ \\
$R_i$ & Speed droop & 0.05 \\
$T_{g_i}$ (s) & Governor time constant & $\{0.40, 0.35, 0.40, 0.35\}$ \\
$T_{t_i}$ (s) & Turbine time constant & $\{0.17, 0.20, 0.17, 0.20\}$ \\
\midrule

\multicolumn{3}{@{}c}{\textbf{Shared power-system parameters}}\\\hline
$T_{ij}$ (p.u./Hz) & Tie-line synchronizing coefficient & 0.05 \\
$\gamma_i$ & Integral/ACE gain & 0.5 \\
\midrule

\multicolumn{3}{@{}c}{\textbf{Wireless network parameters}}\\\hline
$P_{\mathrm{CA}}$ (W) & CA transmit power & 0.2 \\
$P_u^{\mathrm{UL}}$ (W) & CU uplink transmit power & 0.05 \\
$P_u^{\mathrm{DL}}$ (W) & BS  transmit power to each CU  & 0.5 \\
$\omega$ (MHz)& Total available bandwidth & 10 \\
$N_0$ (W/Hz) & Noise power spectral density & $10^{-17.4}$ \\
$R_{\min}^{\mathrm{CA}}$ (kbps) & Minimum CA–CA D2D rate & 10 \\
$L_{\mathrm{pkt}}$ (bits) & Control-packet
size  & 400 \\
$R_u^{\mathrm{UL,min}}$ (Mbps) & Minimum CU uplink rate & 2.8 \\
$R_u^{\mathrm{DL,min}}$ (Mbps) & Minimum CU downlink rate & 3.0 \\
$\eta$ & Path-loss exponent & 2.5 \\
\bottomrule
\end{tabular}
\end{table}
\begin{figure*}[t]
    \centering
    %==================== Top row ====================%
    \begin{minipage}[t]{0.45\textwidth}
        \centering
        \includegraphics[width=0.85\linewidth]{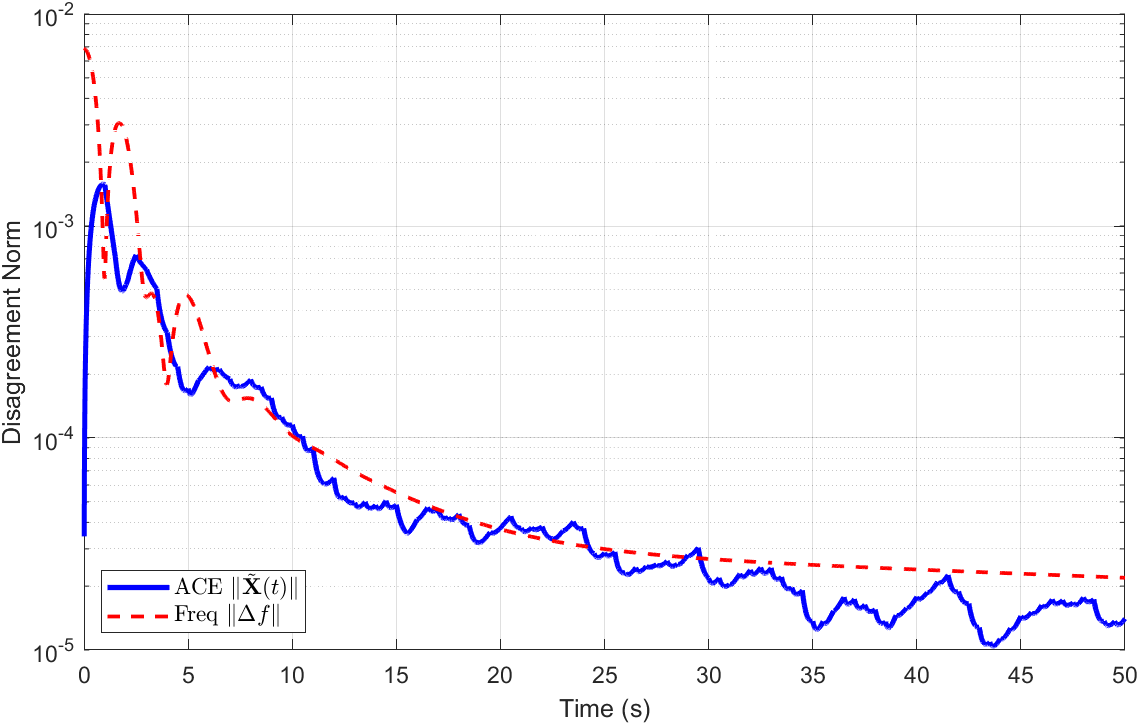}\\      
       {\footnotesize (a) Frequency and ACE deviations (consensus disagreement)}
        \label{fig:frequency_deviation}
    \end{minipage}
    \hfill
    \begin{minipage}[t]{0.45\textwidth}
        \centering
        \includegraphics[width=0.85\linewidth]{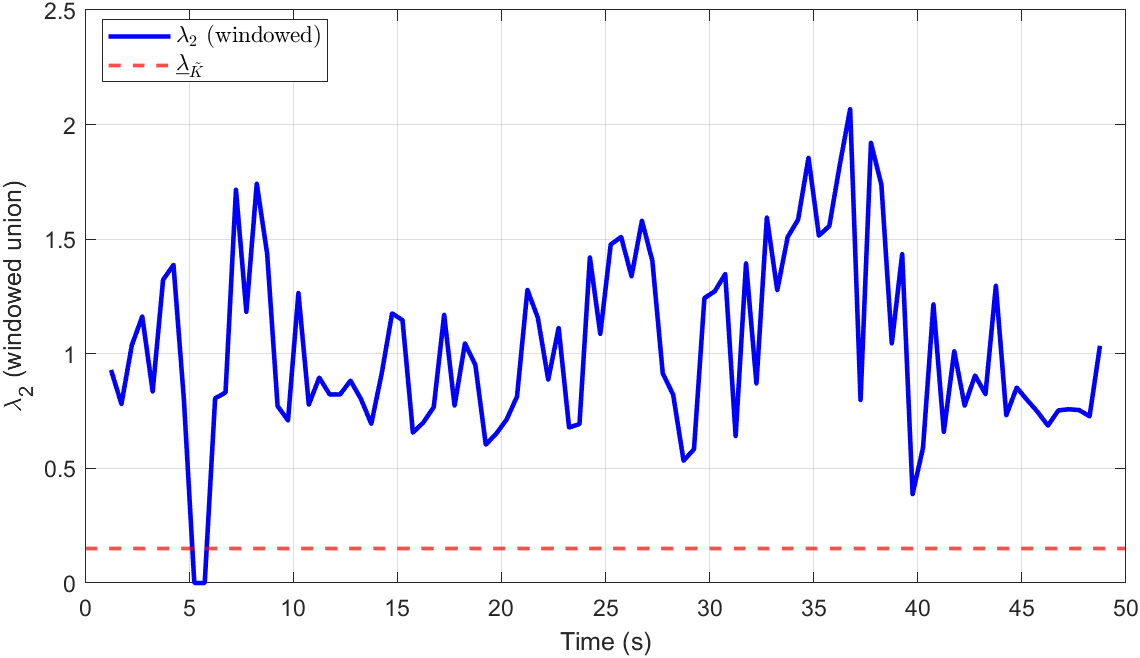}\\      %width=20 em,height=11em 
      {\footnotesize (b) Windowed graph algebraic connectivity}
        \label{fig:Windowed_Graph_Connectivity}
    \end{minipage}
    \caption{Control performance of the multi-agent 20-area LFC system under the proposed consensus-aware co-design.}
    \label{fig:power-metrics}
\end{figure*}

\subsection{Wireless Communication System}

We consider $N_c=20$ CAs and $N_m=40$ CUs.
Each CU communicates with the BS over orthogonal channels, while the CAs exchange their information directly via D2D links to achieve consensus. All CAs are statically located within an $80\,\mathrm{m}\times 80\,\mathrm{m}$ microgrid area, representing controllers of DER units such as rooftop solar inverters, battery storage systems, and controllable loads, whereas CUs are placed uniformly at random in the same area. All CAs transmit at the same power $P_{\mathrm{CA}}$, each CU transmits at uplink power $P_u^{\mathrm{UL}}$, and the BS transmits at downlink power $P_u^{\mathrm{DL}}$
per CU, as specified in Table~\ref{tab:numvalues}. We adopt a Rayleigh block-fading channel model, in which the channel coefficients remain constant within each coherence interval and vary independently across intervals. The considered slot duration $T_k=0.5$~s is determined by the
LFC secondary control timescale rather than the wireless coherence time. For a typical CU with coherence time $T_c\approx45$~ms (corresponding to pedestrian mobility at 2.4~GHz), each time slot contains approximately $T_k/T_c\approx11$ statistically independent fading realizations. Since multiple independent channel realizations occur within a single time slot, the achievable rate experienced during that slot is well approximated by its ergodic average over these realizations. Consequently, the per-slot transmission rate in our optimization is modeled using the ergodic capacity rather than an instantaneous channel realization. At the high SNRs typical of the
$80\,\mathrm{m}\times 80\,\mathrm{m}$ deployment, ergodic capacity closely approximates the capacity at the mean SNR, validating the per-slot Rayleigh model for CU links~\cite{goldsmith2005wireless}. The CA channels, by contrast, are stationary (DER controllers with fixed location) with coherence time far exceeding $T_s$, so the single per-slot realization is exact. The remaining simulation parameters are listed in
Table~\ref{tab:numvalues}.
%The control packet size $L_{\mathrm{pkt}}=400$~bits (50~bytes) is selected consistent with the IEC~61850 Type-1 minimum control message size~\cite{IEC61850}, accommodating the state payload, node identifier, slot index, and error-detection checksum.

\subsection{Numerical Results}
Here, numerical results are presented for the proposed co-design approach in the hybrid/switched system described above. The optimization is implemented in MATLAB using CVX.  The total time of simulation is $T = 50~\mathrm{s}$, which is discretized into slots $T_k,~ k\in \mathbb{N}$ with a length of $0.5~\mathrm{s}$. The continuous-time dynamics over all $T_k$ slots are integrated with a step size of $0.01~\mathrm{s}$. 
\begin{figure*}
\begin{minipage}{0.48\textwidth}
\begin{subfigure}{\textwidth}
\includegraphics[width=20 em,height=11em]{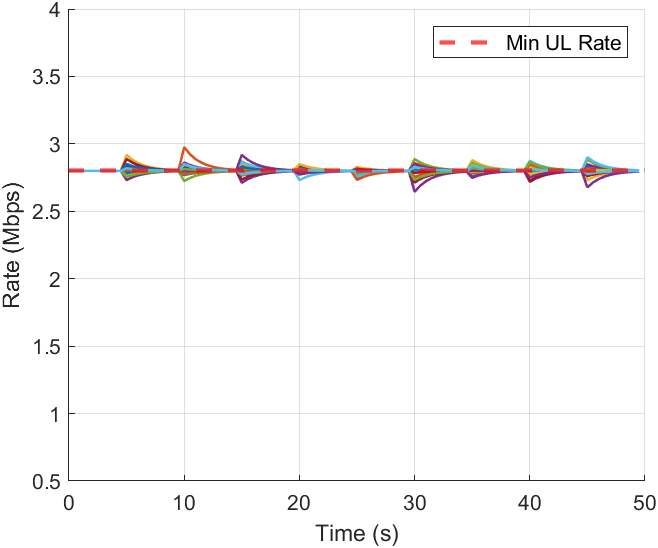}
\caption{\footnotesize Cellular uplink rates (First 20 CUs)}
\label{fig:UL-first-20}
\end{subfigure}
 \medskip
\begin{subfigure}{\textwidth}
\includegraphics[width=20 em,height=11em]{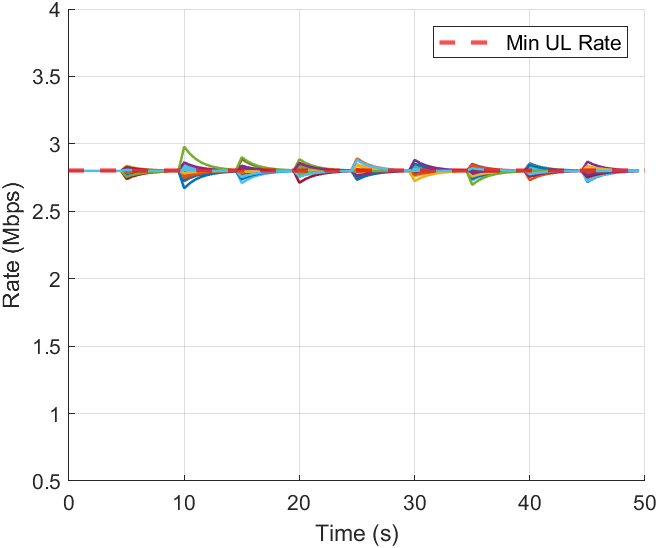}
\caption{\footnotesize Cellular uplink rates (Second 20 CUs)}
\label{fig:UL-second-20}
\end{subfigure}
\caption{Cellular uplink rate}
    \label{fig:Cellular-UL-Rates}
\end{minipage}
\hfill
\begin{minipage}{0.48\textwidth} 
\begin{subfigure}{\textwidth}
\includegraphics[width=20 em,height=11em]{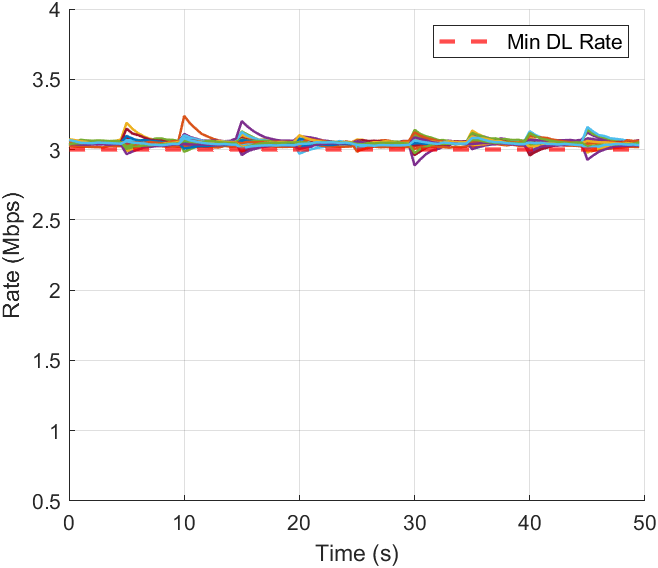}
\caption{\footnotesize  Cellular downlink rates (First 20 CUs)}
\label{fig:DL-first-20}
\end{subfigure}
 \medskip
\begin{subfigure}{\textwidth}
\includegraphics[width=20 em,height=11em]{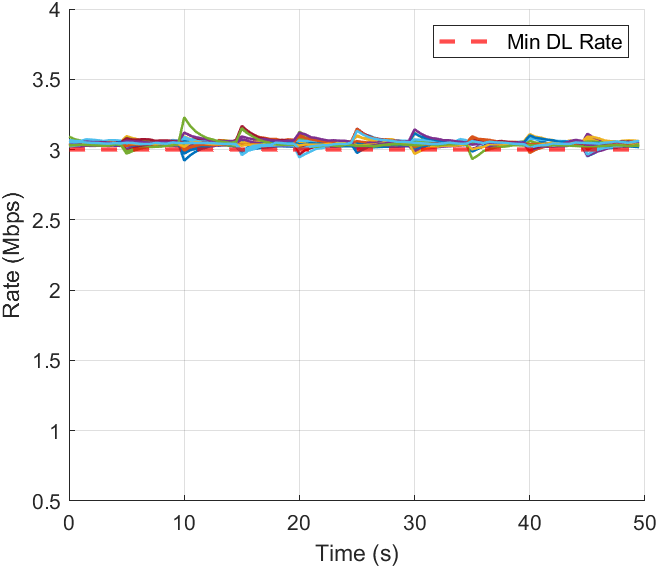}
\caption{\footnotesize Cellular downlink rates  (Second 20 CUs)}
\label{fig:DL-second-20}
\end{subfigure}
\caption{Cellular downlink rate}
    \label{fig:Cellular-DL-Rates}
    \end{minipage}
\end{figure*}

To evaluate the proposed co-design approach, we set $\tilde{K}=5$ and $\underline{\lambda}_{\tilde{K}}=0.15$. Fig.~\ref{fig:power-metrics}a shows the frequency deviations $\Delta f_i(t)$ of the 20~control areas and the ACE consensus disagreement
$\|\tilde{\mathbf{X}}(t)\|$ (defined in~\eqref{disagree}), which measures how far the areas are from reaching agreement on a common ACE correction signal. Following the step load disturbances, all areas experience frequency dips on the order of $10^{-2}$~Hz that remain bounded and converge to a common steady-state offset, consistent with the tie-line synchronization mechanism described in Section~\ref{sec:simulation_system}.
The ACE consensus disagreement $\|\tilde{\mathbf{X}}(t)\|$ decays and falls below $10^{-4}$ by $t\approx 10$~s, demonstrating that the joint connectivity and coupling conditions enforced through constraint~C$_1$ are sufficient to guarantee consensus in the presence of the time-varying wireless-induced switching topology and bandwidth constraints, consistent with Theorem~\ref{thm:consensus-windowed}.
In addition, Fig.~\ref{fig:power-metrics}b reports the windowed algebraic connectivity 
$\lambda_2\!\left(\sum_{\ell=k}^{k+\tilde K-1} \mathbf{L}_c(T_\ell)\right)$ 
of the CA communication graph. Since only a small number of D2D links are active in each slot, the instantaneous connectivity $\lambda_2(\mathbf{L}_c(T_k))$ may temporarily drop to zero when the graph becomes disconnected. In contrast, the windowed metric, evaluated over $\tilde K$ consecutive slots, as required by Theorem~1, remains strictly above the design threshold $\underline{\lambda}_{\tilde K}=0.15$ throughout the simulation. This shows that, although single-slot topologies may be disconnected, the union graph over each $\tilde K$-slot window is connected, ensuring satisfaction of the joint-connectivity constraint and guaranteeing consensus under switching topologies.

Moreover, Figs.~\ref{fig:Cellular-UL-Rates}–\ref{fig:Average_D2D_rate} illustrate the wireless performance when $\tilde{K}=5$ and $\underline{\lambda}_{\tilde{K}}=0.15$. Specifically, Figs.~\ref{fig:UL-first-20} and \ref{fig:UL-second-20} present the UL rates for the first and second groups of 20 CUs, respectively, along with their corresponding QoS thresholds. Similarly, Figs.~\ref{fig:DL-first-20} and \ref{fig:DL-second-20} show the DL rates for these two groups, also compared against their QoS thresholds. As observed, both UL and DL rates, $R_u^{\mathrm{UL,min}}$ and $R_u^{\mathrm{DL,min}}$, remain close to or above these targets for the entire time horizon despite random fading. This behavior demonstrates that, by using the proposed approach, the optimizer effectively balances CA consensus requirements with CU service guarantees under shared bandwidth constraints. Furthermore, Fig.~\ref{fig:Average_D2D_rate} illustrates the average CA--CA D2D
rate across active links. The mean value stays slightly above the target
$R_{\min}^{\mathrm{CA}} = 10~\mathrm{kbps}$ throughout the simulation, confirming that
the D2D links used in the consensus protocol always satisfy the 
QoS requirement with a small safety margin. As illustrated in Fig.~\ref{fig:Active_CA-CA_D2D}, in this configuration
(with $\tilde K = 5$ and $\underline{\lambda}_{\tilde K}=0.15$), the algorithm, on average, selects
$11.5$ CA--CA links, which is the minimum number of edges required for
feasibility under the chosen windowed spectral-connectivity constraint.
This demonstrates that the proposed graph–design stage indeed finds the sparsest admissible adjacency patterns that satisfy the windowed spectral condition while minimizing communication overhead. Overall, these results show that the proposed consensus-aware co-design yields (i) fast and accurate frequency regulation with guaranteed consensus among the CAs, (ii) satisfaction of CU QoS targets in both uplink and downlink, and (iii) significant bandwidth savings through minimal, spectrally constrained D2D topologies and adaptive resource allocation.
\begin{figure}[t]
  \centering
  \includegraphics[width=0.4\textwidth]{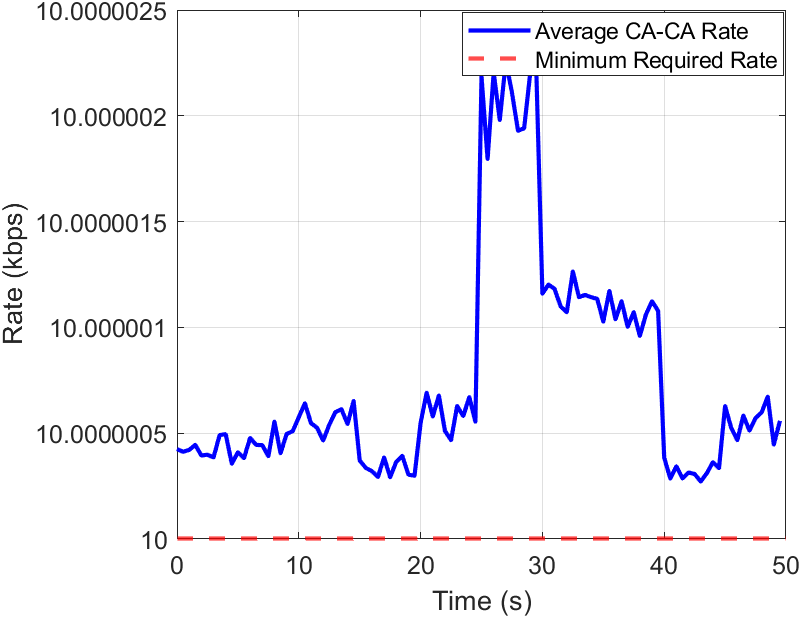}
  \caption{Average CA-CA D2D rate}
  \label{fig:Average_D2D_rate}
\end{figure}
\begin{figure}[t]
  \centering
  \includegraphics[width=0.9\linewidth]{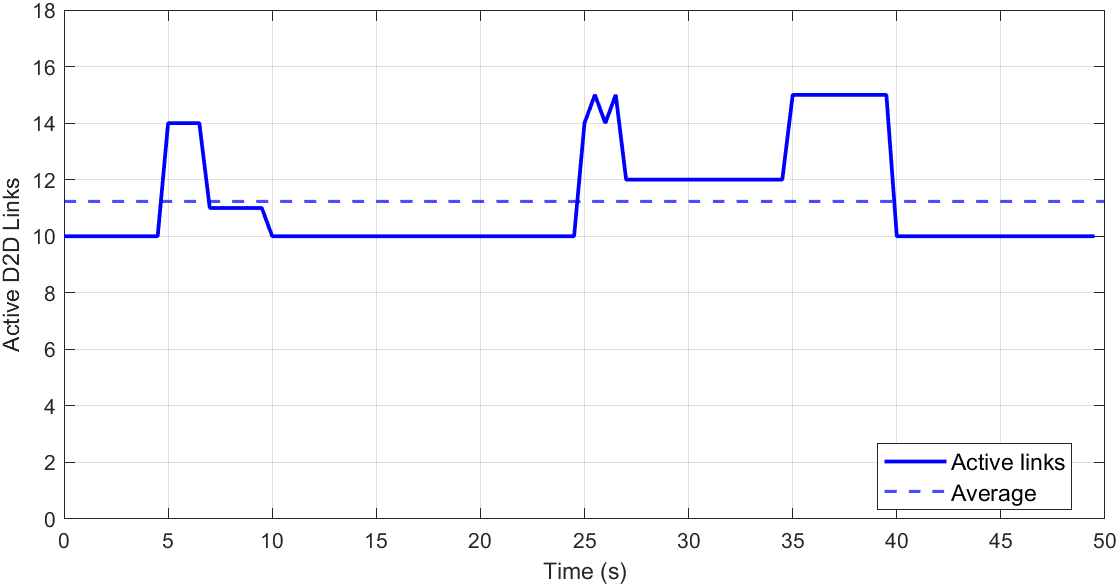}
  \caption{Active CA-CA D2D Links per Slot.}
  \label{fig:Active_CA-CA_D2D}
\end{figure}

\textbf{Comparison with two baseline methods}:
We compare the proposed co-design approach with the following two baselines:

1. {\em Event-Triggered  baseline~\cite{cheng2018event}}.
The event-triggered baseline implements the event-triggered consensus
protocol with a switching topology that alternates between
two complementary subgraphs $\mathcal{G}_A$ and
$\mathcal{G}_B$ every $\tau_d=3$ slots, satisfying the
jointly-connected condition over $\tilde{K}=5$ consecutive
slots.
Each agent broadcasts its information, including ACE integral state $\xi_i$, only
when the staleness error exceeds a predefined
threshold determined by the triggering function of
Cheng et al.~\cite{cheng2018event}; between trigger events, the last broadcast value is held constant (zero-order hold). Bandwidth is pre-provisioned conservatively without
per-link CSI or joint optimization with the cellular tier. Specifically, for D2D links, bandwidth is reserved for all
scheduled edges in $\mathcal{G}(T_k)$~--~not only those
whose endpoints trigger~--~using the worst-case channel SNR among all scheduled links in the current slot:
\begin{equation}
B_{ij}^{\mathrm{ET}}(T_k)
= \frac{2\,R_{\min}^{\mathrm{CA}}}
       {\omega\,
        \log_2\!\bigl(1 +
          \min_{(i,j)\in\mathcal{G}(T_k)} c_{ij}(T_k)
        \bigr)}.
\label{eq:ET-D2D-BW}
\end{equation}
Indeed, the fundamental limitation of
operating without a joint optimizer is the operator must either over-provision~--~wasting spectrum~--~or risk QoS violations.
%Bandwidth is pre-provisioned conservatively for all scheduled edges using the worst-case channel SNR among active links per slot, without per-link CSI or joint optimization with the cellular tier.
 
2. {\em Fully-Connected baseline}.
The fully-connected baseline activates all
$\binom{N_c}{2}=190$ D2D edges in every slot, providing the maximum possible algebraic connectivity
$\lambda_2(\mathbf{L})=N_c=20$ and representing the theoretical upper bound on consensus performance. Bandwidth is allocated conservatively using the worst-case channel SNR across all users per slot (identical to the event-triggered baseline) but applied to all 190 D2D links simultaneously. No topology optimization is performed.
 
Fig.~\ref{fig:ACE_Comp} illustrates the ACE consensus disagreement $\|\tilde{\mathbf{X}}(t)\|$ for the three schemes.
The co-design scheme achieves convergence to $1.4\times10^{-5}$ within approximately 15~s, demonstrating successful secondary frequency regulation across all 20~areas.
The fully-connected baseline converges faster and to a lower residual ($5.5\times10^{-7}$) owing to its maximum algebraic connectivity $\lambda_2=N_c=20$, confirming the
theoretical relationship between $\lambda_2$ and consensus rate discussed in
Section~\ref{sec:system-model}. The event-triggered baseline fails to achieve consensus, stabilizing at $3.9\times10^{-4}$~---~a level 28$\times$ above co-design~--~because its fixed switching topology provides weaker algebraic connectivity for 20~heterogeneous integrating ACE states. In addition, Fig.~\ref{fig:FREQ_Comp} shows the frequency consensus disagreement $\|\Delta\tilde{f}(t)\|$ for the three schemes.
All three converge to virtually identical final values (co-design: $2.2\times10^{-5}$, event-triggered: $2.3\times10^{-5}$, fully-connected: $2.2\times10^{-5}$), confirming that frequency synchronization is governed by the physical tie-line
coupling, which is consistent with the system model in Section~\ref{sec:simulation_system}.
 
Fig.~\ref{fig:Bandwith_comp} shows the corresponding normalized bandwidth consumption. The co-design scheme maintains a stable allocation of 59.1\%, substantially below both event-triggered~(83.2\%) and fully-connected~(86.1\%), achieving savings of 24.1\% and 27.0\%, respectively. The event-triggered scheme exhibits severe slot-to-slot fluctuations with spikes occasionally exceeding 100\% of the available
budget, a consequence of the conservative worst-case pre-provisioning applied to a time-varying set of scheduled links without per-slot channel-aware optimization.
The fully-connected scheme maintains a flat allocation at 86.1\%, reflecting the fixed cost of provisioning all 190~D2D
links simultaneously. Taken together, Figs.~\ref{fig:ACE_Comp} and~\ref{fig:Bandwith_comp} confirm the central claim of
this work: the proposed co-design achieves fully-connected quality consensus convergence at a bandwidth cost 27.0\% below fully-connected and 24.1\% below event-triggered, demonstrating the efficiency
of the joint topology and bandwidth optimization.

Notably, the event-triggered protocol was originally proposed to reduce communication overhead in control systems by transmitting only when the local estimation error exceeds a threshold. However, the simulation results reveal a fundamental limitation: without joint optimization of the control and communication, the event-triggered scheme
cannot achieve its goal. Specifically, its switching topology provides weaker algebraic connectivity
($\lambda_2\approx 0.094 < \underline{\lambda}_{\tilde{K}}=0.15$)
for 20~heterogeneous areas, causing the ACE
disagreement to stagnate at $3.9\times10^{-4}$ while consuming 83.2\% of the available bandwidth~--~24.1\% more than the proposed co-design. This demonstrates that event-triggered communication
alone is insufficient when the underlying wireless network is not co-designed with the control objective: reducing the number of transmissions.
\begin{figure}[h]
  \centering
  \includegraphics[width=0.8\linewidth]{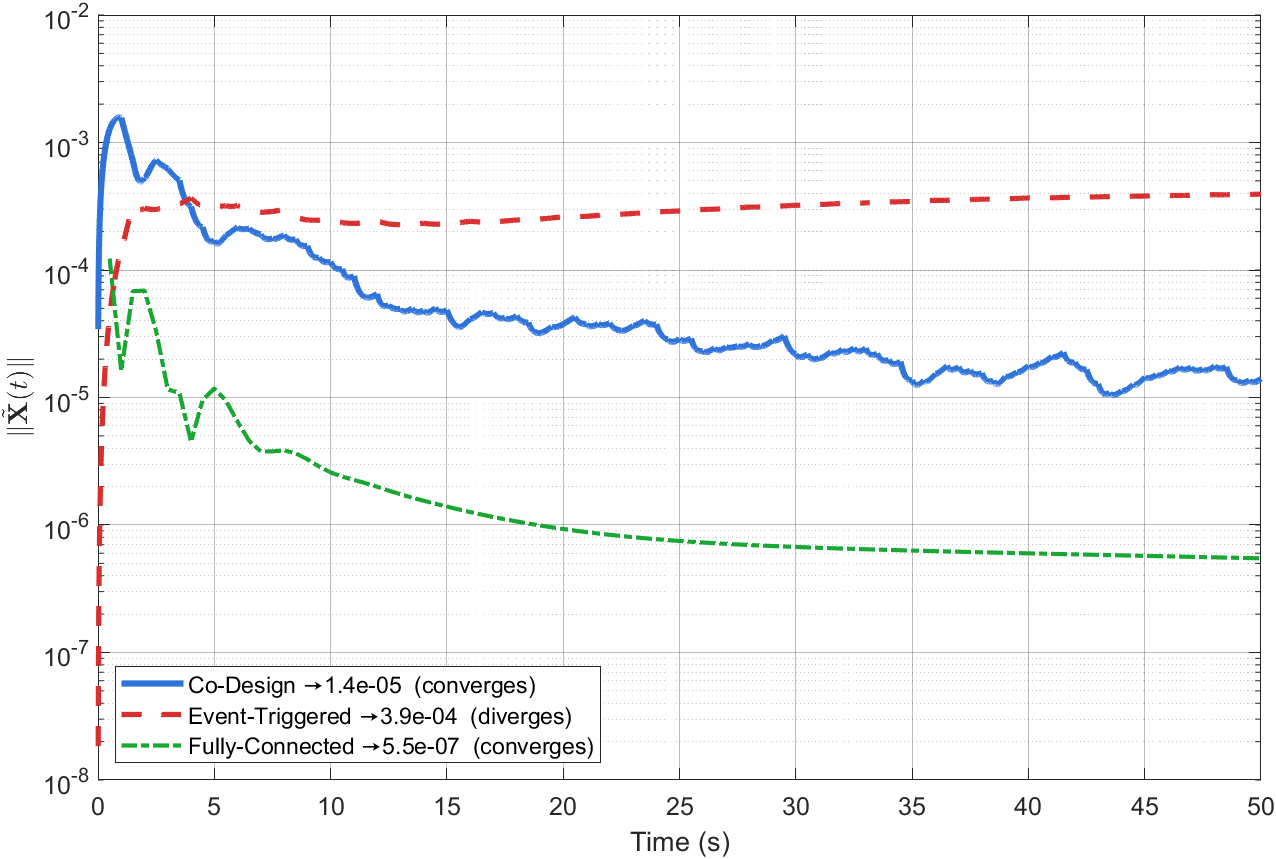}
  \caption{ACE deviations (Consensus disagreement) of the proposed co-design compared with the event-triggered and fully-connected baselines.}
  \label{fig:ACE_Comp}
\end{figure}
\begin{figure}[h]
  \centering
  \includegraphics[width=0.8\linewidth]{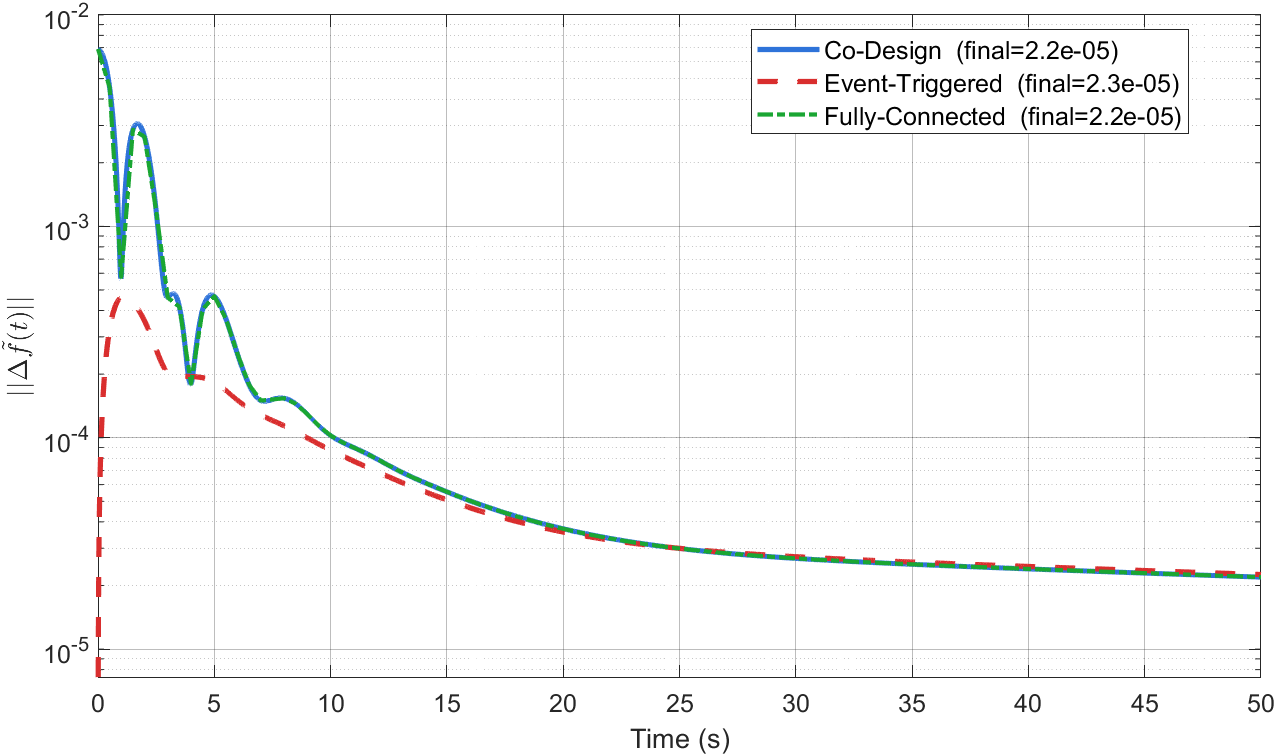}
  \caption{Frequency deviations of the proposed co-design compared with the event-triggered and fully-connected baselines.}
  \label{fig:FREQ_Comp}
\end{figure}

\begin{figure}[t]
  \centering
  \includegraphics[width=0.8\linewidth]{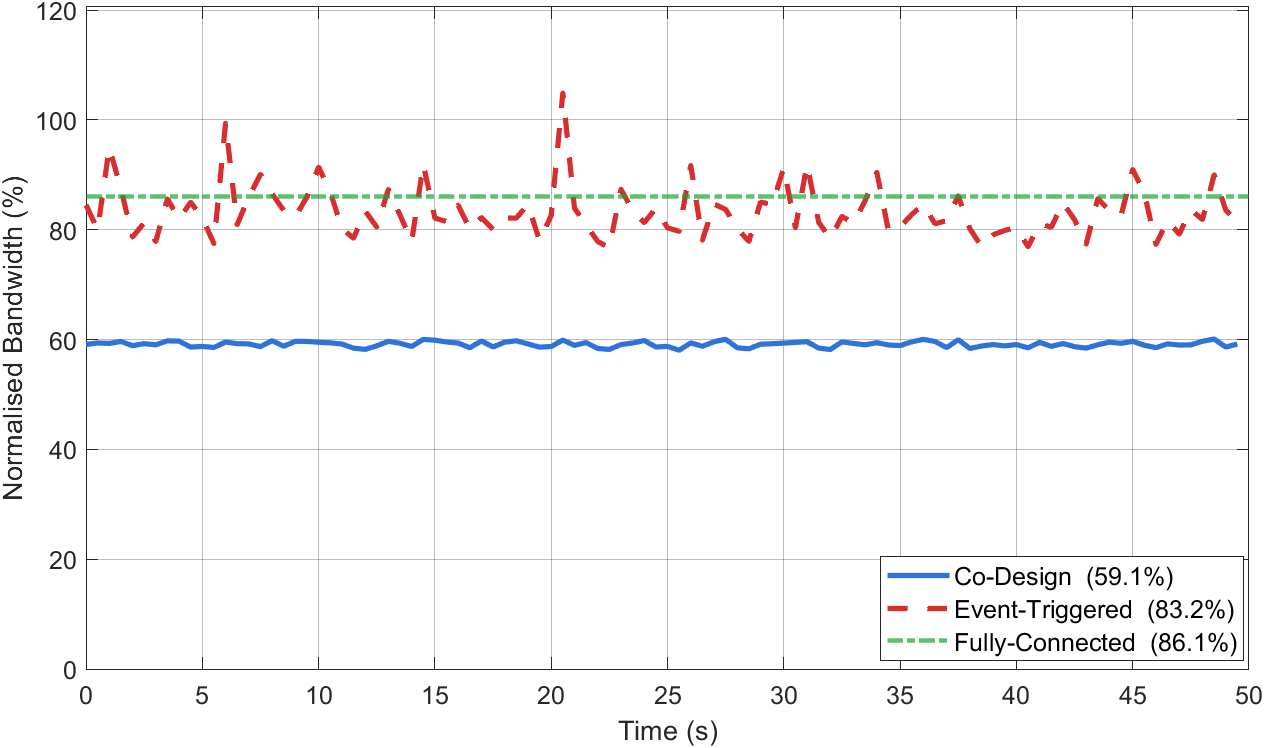}
  \caption{Normalized total bandwidth consumption of the proposed co-design compared with the event-triggered and fully-connected baselines.}
  \label{fig:Bandwith_comp}
\end{figure}
\vspace{-1.0em}
\section{Conclusion}\label{Conclusion}
We have introduced a consensus-aware co-design approach that jointly optimizes the MACS communication topology and wireless resource allocation. The proposed approach is motivated by the coupling between the communication and control subsystems in emerging MACSs, where inter-agent communication relies on D2D links sharing the wireless spectrum with CUs. By jointly optimizing the consensus topology and bandwidth allocation, the proposed approach minimizes total bandwidth consumption while maintaining the communication and consensus requirements of the MACS. In contrast, optimizing the communication and control subsystems independently either degrades consensus performance or wastes wireless resources. Although the resulting optimization problem is a MINP (NP-hard in general), a structural property of the optimal solution, formalized in \textbf{Lemma~\ref{lem:adjacency_minimal}}, shows that the feasible search space can be reduced without excluding any globally optimal solution. This structural property provides theoretical insight into the interaction between the communication and control subsystems by showing that their joint optimization can be solved in two sequential stages with polynomial complexity per slot. We validate the proposed approach through a smart power grid case study comprising 20 CAs (LFC areas) and 40 CUs. The proposed approach achieves 24–27\% bandwidth savings compared with the two baselines while maintaining consensus and satisfying all QoS requirements. %While learning-based methods are promising for solving high-complexity optimization problems, they are not pursued in this work for three reasons. First, the problem involves hybrid decision variables (continuous and discrete) together with nonconvex constraints, making training computationally expensive. Second, providing theoretical generalization guarantees under uncertainty remains challenging. Third, MACSs are often deployed in safety-critical applications where reliability and interpretability are essential, yet difficult to ensure for many learning-based approaches. Developing learning-based co-design frameworks that address these challenges constitutes an important direction for future research.
\bibliographystyle{ieeetr}  
\bibliography{references}

@article{park2017wireless,
  title={Wireless network design for control systems: A survey},
  author={Park, Pangun and Ergen, Sinem Coleri and Fischione, Carlo and Lu, Chenyang and Johansson, Karl Henrik},
  journal={IEEE Communications Surveys \& Tutorials},
  volume={20},
  number={2},
  pages={978--1013},
  year={2017},
  publisher={IEEE}
}

@article{chen2019control,
  title={On the control of multi-agent systems: A survey},
  author={Chen, Fei and Ren, Wei and others},
  journal={Foundations and Trends{\textregistered} in Systems and Control},
  volume={6},
  number={4},
  pages={339--499},
  year={2019},
  publisher={Now Publishers, Inc.}
}

@article{tiberi2013energy,
  title={Energy-efficient sampling of networked control systems over {IEEE} 802.15. 4 wireless networks},
  author={Tiberi, Ubaldo and Fischione, Carlo and Johansson, Karl Henrik and Di Benedetto, Maria Domenica},
  journal={Automatica},
  volume={49},
  number={3},
  pages={712--724},
  year={2013},
  publisher={Elsevier}
}

@article{nowzari2019event,
  title={Event-triggered communication and control of networked systems for multi-agent consensus},
  author={Nowzari, Cameron and Garcia, Eloy and Cort{\'e}s, Jorge},
  journal={Automatica},
  volume={105},
  pages={1--27},
  year={2019},
  publisher={Elsevier}
}

@article{pang2025communication,
  title={Communication-control codesign for large-scale wireless networked control systems},
  author={Pang, Gaoyang and Liu, Wanchun and Niyato, Dusit and Vucetic, Branka and Li, Yonghui},
  journal={IEEE Journal on Selected Areas in Communications},
   year={2025},
  volume={43},
  number={10},
  pages={3295-3312},
  publisher={IEEE}
}

@article{termehchi2020joint,
  title={Joint sampling time and resource allocation for power efficiency in industrial cyber--physical systems},
  author={Termehchi, Atefeh and Rasti, Mehdi},
  journal={IEEE Transactions on Industrial Informatics},
  volume={17},
  number={4},
  pages={2600--2610},
  year={2020},
  publisher={IEEE}
}

@article{zeng2019joint,
  title={Joint communication and control for wireless autonomous vehicular platoon systems},
  author={Zeng, Tengchan and Semiari, Omid and Saad, Walid and Bennis, Mehdi},
  journal={IEEE Transactions on Communications},
  volume={67},
  number={11},
  pages={7907--7922},
  year={2019},
  publisher={IEEE}
}

@article{diao2025task,
  title={Task-Oriented Co-Design of Communication, Computing, and Control for Edge-Enabled Industrial Cyber-Physical Systems},
  author={Diao, Yufeng and Zhang, Yichi and De Martini, Daniele and Zhao, Philip Guodong and Li, Emma Liying},
  journal={IEEE Journal on Selected Areas in Communications},
  year={2025},
  volume={43},
  number={9},
  pages={3041-3055},
  publisher={IEEE}
}

@article{ren2005consensus,
  title={Consensus seeking in multiagent systems under dynamically changing interaction topologies},
  author={Ren, Wei and Beard, Randal W},
  journal={IEEE Transactions on Automatic Control},
  volume={50},
  number={5},
  pages={655--661},
  year={2005},
  publisher={IEEE}
}

@article{moreau2005stability,
  title={Stability of multiagent systems with time-dependent communication links},
  author={Moreau, Luc},
  journal={IEEE Transactions on Automatic Control},
  volume={50},
  number={2},
  pages={169--182},
  year={2005},
  publisher={IEEE}
}

@article{wang2025communication,
  title={Communication and Control Co-Design for Heterogeneous Industrial IoT: A Logic-Based Stochastic Switched System Approach},
  author={Wang, Shuling and Zhu, Shanying and Chen, Cailian and Shen, Fei and Zhang, Weihai and Guan, Xinping},
  journal={IEEE Journal on Selected Areas in Communications},
  year={2025},
  publisher={IEEE}
}

@article{li2025distributed,
  title={Distributed Optimization for Estimation and Communication Co-Design Under Bandwidth Constraints},
  author={Li, Peizhe and Chen, Cailian and Zhu, Shanying and Guan, Xinping},
  journal={IEEE Transactions on Signal and Information Processing over Networks},
  year={2025},
  volume={11},
  number={},
  pages={1112-1126},
  publisher={IEEE}
}

@article{eisen2019control,
  title={Control aware radio resource allocation in low latency wireless control systems},
  author={Eisen, Mark and Rashid, Mohammad M and Gatsis, Konstantinos and Cavalcanti, Dave and Himayat, Nageen and Ribeiro, Alejandro},
  journal={IEEE Internet of Things Journal},
  volume={6},
  number={5},
  pages={7878--7890},
  year={2019},
  publisher={IEEE}
}

@ARTICLE{li2016event,
  author={Li, Huaqing and Chen, Guo and Huang, Tingwen and Dong, Zhaoyang and Zhu, Wei and Gao, Lan},
  journal={IEEE {Transactions} on {Cybernetics}}, 
  title={Event-Triggered Distributed Average Consensus Over Directed Digital Networks With Limited Communication Bandwidth}, 
  year={2016},
  volume={46},
  number={12},
  pages={3098-3110},
  publisher={IEEE}
  }

@inproceedings{hong2019adaptive,
  title={Adaptive communication and control co-design for multi-agent coordination with second-order dynamics},
  author={Hong, Jooeun and Votion, Johnathan and Cao, Yongcan and Jin, Yufang},
  booktitle={2019 American Control Conference (ACC)},
  pages={5322--5327},
  year={2019},
  organization={IEEE}
}

@article{oliveira2021co,
  title={Co-design of consensus-based approach and reliable communication protocol for vehicular platoon control},
  author={Oliveira, Ren{\^e} and Montez, Carlos and Boukerche, Azzedine and Wangham, Michelle S},
  journal={IEEE Transactions on Vehicular Technology},
  volume={70},
  number={9},
  pages={9510--9524},
  year={2021},
  publisher={IEEE}
}

@article{olfati2007consensus,
  title={Consensus and cooperation in networked multi-agent systems},
  author={Olfati-Saber, Reza and Fax, J Alex and Murray, Richard M},
  journal={Proceedings of the IEEE},
  volume={95},
  number={1},
  pages={215--233},
  year={2007},
  publisher={IEEE}
}

@book{ren2008distributed,
  title={Distributed consensus in multi-vehicle cooperative control: Theory and applications},
  author={Ren, Wei and Beard, Randal W},
  year={2008},
  publisher={Springer}
}

@article{jadbabaie2003coordination,
  title={Coordination of groups of mobile autonomous agents using nearest neighbor rules},
  author={Jadbabaie, Ali and Lin, Jie and Morse, A Stephen},
  journal={IEEE Transactions on Automatic Control},
  volume={48},
  number={6},
  pages={988--1001},
  year={2003},
  publisher={IEEE}
}

@inproceedings{scardovi2008synchronization,
  title={Synchronization in networks of identical linear systems},
  author={Scardovi, Luca and Sepulchre, Rodolphe},
  booktitle={2008 47th IEEE Conference on Decision and Control},
  pages={546--551},
  year={2008},
  organization={IEEE}
}

@inproceedings{ghosh2006growing,
  title={Growing well-connected graphs},
  author={Ghosh, Arpita and Boyd, Stephen},
  booktitle={Proceedings of the 45th IEEE Conference on Decision and Control},
  pages={6605--6611},
  year={2006},
  organization={IEEE}
}

@article{singh2017distributed,
  title={Distributed multi-agent system-based load frequency control for multi-area power system in smart grid},
  author={Singh, Vijay Pratap and Kishor, Nand and Samuel, Paulson},
  journal={IEEE Transactions on Industrial Electronics},
  volume={64},
  pages={5151--5160},
  year={2017},
  publisher={IEEE}
}

@article{olfati2004consensus,
  title={Consensus problems in networks of agents with switching topology and time-delays},
  author={Olfati-Saber, Reza and Murray, Richard M},
  journal={IEEE Transactions on Automatic Control},
  volume={49},
  number={9},
  pages={1520--1533},
  year={2004},
  publisher={IEEE}
}

@article{cheng2018event,
  title={Event-triggered consensus of homogeneous and heterogeneous multiagent systems with jointly connected switching topologies},
  author={Cheng, Bin and Wang, Xiangke and Li, Zhongkui},
  journal={IEEE transactions on cybernetics},
  volume={49},
  number={12},
  pages={4421--4430},
  year={2018},
  publisher={IEEE}
}

@book{goldsmith2005wireless,
  author    = {Goldsmith, A.},
  title     = {Wireless Communications},
  publisher = {Cambridge University Press},
  year      = {2005}
}

\end{document}